\newcommand{\vf}{\varphi}
\newcommand{\X}{\mathscr{X}}
\newcommand{\e}{\mathbf{e}}
\begin{document}
\mainmatter              
\title{Minimal unit vector fields on oscillator groups}
\titlerunning{Minimal unit vector fields}  
%
\author{Alexander Yampolsky}
\authorrunning{Alexander Yampolsky} 
%
%
\institute{V.N. Karazin Kharkiv National University, Ukraine\\
\email{a.yampolsky@karazin.ua}}

\maketitle              

\begin{abstract}
In this paper we treat minimal left-invariant unit vector fields on oscillator group and their relations with the ones that defines a harmonic map.  Particularly, if all structure constants of the oscillator group are equal to each other, then all unit leftinvariant vector fields that define a harmonic map into the unit tangent bundle with Sasaki metric are minimal.
\keywords{minimal unit vector fields, Sasaki metric, oscillator group, harmonic maps}

MSC 2010: 53C20, 53C25, 53C43
\end{abstract}

\section{Introduction and preliminaries}
Let $(M,g)$ be Riemannian manifold of dimension $n$. Denote by $U(u^1,\ldots, u^n)$  a local chart on $M$, and by $\partial_i =\partial/\partial u^i$ the natural coordinate frame over $U$.  At each $p\in U$,  there is a decomposition $ V = V ^i\partial_i$ for each $ V \in T_pM$. Then  $(u^1, \ldots,u^n;  V ^1,\ldots,  V ^n)$ forms a local coordinate chart $TU$ on the tangent bundle $TM$.

Denote by $ds^2$ the line element of $(M,g)$. Denote by  $\nabla$ the Livi-Chivita connection for $g$. The Sasaki metric line element $d\sigma$ on the tangent bundle $TM$ is defined by local Pithagorean theorem, namely,
 $$
 d\sigma^2=ds^2+|D V |_g^2,
 $$
 where $D V ^i=du^k\nabla_k  V ^i =d V ^i+\Gamma^i_{jk} V ^jdu^k$ is a covariant differential of $ V ^i$.  At this point the $( V ^1,\ldots, V ^n)$ are independent variables. Denote by $g_S$ the  Riemannian metric on $TM$  with the line element $d\sigma^2$. \emph{Riemannian manifold $(TM,g_S)$ is referred to as tangent bundle with Sasaki metric.}

 Let $ V (u)$ be a smooth local tangent vector field on $M$. In terms of natural local coordinates $(U(u^1, \ldots,u^n);  V ^1,\ldots,  V ^n)$ the field $ V $  defines a local embedding $ V :U\to (TM,g_S)$ as
 $$
 \left\{
 \begin{array}{l}
   u^i=u^i,\\
  V ^i= V ^i(u^1,\ldots, u^n).
 \end{array}
 \right.
 $$
 In this approach a smooth local vector field can be understood as local explicitly given submanifold/graph  $ V (U)\subset (TM, g_S)$ \emph{with induced intrinsic and extrinsic geometry}.

If $ V $ can be given globally, then $ V (M)\subset (TM,g_S)$ is globally given graph in $(TM, g_S)$ diffeomorphic to the base manifold. Remark that if one do not pose any restrictions on the vector field, then $ V (M)$ does not inherit intrinsic geometry of the base manifold even locally, and its extrinsic geometry might be of any kind. To see this, it is sufficient to consider the case $g_{ij}=\delta_{ij}$, when $(TM,g_S)$ is isometric to $E^{2n}$ with Euclidean metric, and to look on a graph $ V (E^n)\subset E^{2n}$.  In contrast, if $g( V , V )=1$, then $ V (M)$ gives rise to some kind of equidistant, relative to the base, submanifold in $T_1M\subset (TM, g_S)$, and the geometry of $ V (M)$ becomes more predictable both, in intrinsic or extrinsic senses.

If $g( V , V )=1$, then $ V (M)$ is globally/locally given $n$ dimensional submanifold in $T_1M$ of codimension $(n-1)$ with the pull-back metric
\begin{equation}\label{MetricVF}
g_ V (X,Y)=g(X,Y)+g(\nabla_X  V ,\nabla_Y  V ).
\end{equation}
One can see that this metric is a local additive deformation of the metric $g$ on $M$ in presence of a (unit) vector field $ V $. At this point \emph{one can assign } the geometry of submanifold $ V (M)$ to  the field $ V $. One can talk about \emph{{sectional curvature}},  \emph{{Ricci curvature}},  \emph{{scalar}} curvature and other properties from Riemannian geometry.

In this paper we will focus on the \emph{extrinsic} properties of the submanifold $ V (M)\subset (TM, g_S)$ connected with the second fundamental form of (local) imbedding $ V (M)\subset (T_1M, g_S)$.

\subsection{Second fundamental form  of $ V (M)\subset (T_1M, g_S)$}

Denote by $\X(M)$ a module of smooth vector fields om $M$.
At each point $p\in M$ the point-wise \emph{Nomizu operator} $A_ V :T_p M\to  V ^\perp \subset T_pM)$ is defined by
$$
A_ V  X=-\nabla_X V .
$$
The \emph{adjoint} Nomizu operator  $A_ V ^t$ is defined by $$ g(A_ V  X,Y)=g(X,A_ V ^tY).$$
If $ V $ is smooth, then both operators form $(1,1)$ smooth tensor fields on $M$.
The {\emph{tangent}} $ V _*:\X(M)\to T V (M)$ and the {\emph{normal}} $\tilde n:\X(M)\to T^\perp V (M)$
mappings are defined by \cite{Ym-2003}
\begin{equation}\label{Framing}
 V _*(X)=X^h-(A_ V  X)^{t}=X^h-(A_ V  X)^v,\quad
\tilde n(X)=(A_ V ^tX)^h+X^{t},
\end{equation}
where $X^{t}$ means the vertical lift of $X-g(X, V ) V $ and is called by \emph{tangentional lift} of the corresponding object.

 The  \emph{rough $ V $ - Hessian} and the \emph{$ V $-harmonicity tensor } are given by
$$
Hess_ V (X,Y)=\frac12\big((\nabla_X A_ V )Y+(\nabla_Y A_ V )X\big)
$$
$$
Hm_ V (X,Y)=\frac12\big(R( V , A_ V  X)Y+R( V , A_ V  Y)X\big),
$$
where $(\nabla_X A_ V )Y=\nabla_X(A_ V  Y)-A_ V (\nabla_X Y)$ and $R$ -- is the curvature tensor of the base manifold $(M,g)$.
The
$$
trace (Hess_ V )=\sum (\nabla_{e_i} A_ V )e_i=\bar \Delta  V
$$
is known as the  {\emph{rough (or Bohner) Laplacian}}. The unit vector field is said to be  \textbf{harmonic}  if
$$
\bar\Delta V =|A_ V |^2  V
$$
with respect to some orthonormal frame $\{e_1,\ldots, e_n\}.$
If in addition
$$
trace (Hm_ V )=\sum R( V ,A_ V  e_i)e_i =0,
$$
then $ V $ defines a \textbf{\emph{harmonic map}} $ V :M\to (T_1M,g_S)$ of the base manifold into its unit tangent bundle with the Sasaki metric. The \emph{ second fundamental form of  mapping} $ V :M\to (T_1M,g_S)$ is given by
$$
B_ V (X,Y)=\big((Hm_ V (X,Y)\big)^h+\big(Hess_ V (X,Y)\big)^t.
$$
It follows that the tension field of the mapping $ V :M\to (T_1M,g_S)$ is given by
$$
\tau_ V =(trace(Hm_ V )^h+(\bar\Delta  V )^t.
$$
The {  \emph{second fundamental form of the submanifold}} $( V (M),g_ V )\subset (T_1M,g_S)$ with respect to the normal vector field $\tilde n(Z)$ is of the
form \cite{Ym-2003}
\begin{equation}\label{eqn:6}
\tilde\Omega_{\tilde n( Z)}( V _* X, V _* Y) =g\big(Hess_ V (X,Y)+A_ V  Hm_ V (X,Y),Z^\perp\big),
\end{equation}
where $Z^\perp=Z-g(Z, V ) V $.

 As a consequence, the \emph{second fundamental form vanish} if and only if
\begin{equation}\label{TGF}
Hess_ V (X,Y)+A_ V  Hm_ V (X,Y)-g(A_ V  X,A_ V  Y) V =0.
\end{equation}
The equation \eqref{TGF} provides a condition on  { \emph{totally geodesic property of the unit vector field}} \cite{Ym-2005}.
In spite the fact that the system \eqref{TGF} is overdefinite system of differential equations, in some cases the system can be solved \cite{Ym-2003}, \cite{Ym-2004}, \cite{Ym-2005}, \cite{Ym-2007}.

\subsection{Minimal unit vector fields}

At first, minimality of a unit vector field was defined in global setting by H. Gluck and W. Ziller \cite{GZ} by using the volume variation of a submanifold $ V (M)\subset (T_1M, g_S)$ relative to variation of the field within the class of unit vector fields. O. Gil-Medrano and E. Llinares-Fuster \cite{GM-LF}  proved that in local setting minimality of unit vector field is equivalent to minimality of a submanifold $ V (M)\subset (T_1M, g_S)$ relative to classic volume variation of a submanifold. In other words, locally given unit vector field $ V $ is locally minimal, if the submanifold $ V (M)\subset (T_1M, g_S)$ has zero mean curvature vector, and minimality problem for the unit vector field reduces to trace of the shape operator for $ V (M)\subset (T_1M, g_S)$.

Probably, the easiest way to rich the shape operator is a singular frame of the Nomizu operator. Since $ V \in \ker A_ V ^t$, the orthonormal singular frames for the Nomizu operator $A_ V $  consist of
$$
\{e_1,\ldots, e_{n-1}, e_{n},\}\in \X(M), \quad  \{ f_1,\ldots, f_{n-1},f_n= V \}\in  V ^\perp
$$
such that
$$
 A_ V  e_\alpha=\sigma_\alpha f_\alpha, \ A_ V  e_n =0, \quad  A_ V ^t f_\alpha=\sigma_\alpha e_\alpha\quad (\alpha=1,\ldots, n-1),
$$
where $\sigma_1\geq \sigma_2\geq\ldots\geq \sigma_{n-1}\geq\sigma_n=0 $ are singular values for $A_ V $. With respect to the singular frames the tangent and normal framing for $ V (M)$ consists of \cite{Ym-2002}
$$
\tilde e_n =e_n^h, \quad \tilde e_\alpha =\frac{e_\alpha ^h+\sigma_\alpha f_\alpha ^v}{\sqrt{1+\sigma_\alpha^2}}, \qquad \tilde n_\alpha=\frac{-\sigma_\alpha e_\alpha^h+ f_\alpha^v}{\sqrt{1+\sigma_\alpha ^2}},  \quad (\alpha=1,\ldots, n-1)
$$

The mean curvature vector of local embedding $ V (M)\subset T_1M$ with respect to this framing is of the form \cite{Ym-2002}
\begin{equation}\label{H}
H_ V =\frac{1}{n}\sum_{\alpha=1}^{n-1}  g\left( \sum_{i=1}^{n}\frac{(\nabla_{e_i}A_ V )e_i+A_ V  R( V ,A_ V  e_i )e_i}{1+\sigma_i^2},\frac{f_\alpha}{\sqrt{1+\sigma_\alpha^2}}\right)\tilde n_\alpha
\end{equation}
The expression \eqref{H}  has nice geometrical meaning. Denote
$$
\check{H}_ V =\sum_{i=1}^{n}\frac{(\nabla_{e_i}A_ V )e_i+A_ V  R( V ,A_ V  e_i )e_i}{1+\sigma_i^2}
$$
It is easy to see that
$
g(\check{H}_ V , V )=\left(\sum_{i=1}^{n} \frac{\sigma_i^2}{1+\sigma_i^2}\right).
$
Hence, the minimality condition  can be given  in the form
\begin{equation}\label{Minimality}
\sum_{i=1}^{n}\frac{(\nabla_{e_i}A_ V )e_i+A_ V  R( V ,A_ V  e_i )e_i}{1+\sigma_i^2}=\left(\sum_{i=1}^{n} \frac{\sigma_i^2}{1+\sigma_i^2}\right) V
\end{equation}
The latter means that \emph{ $ V $ is minimal iff $\check{H}_ V $ is collinear to $ V $.}

\subsection{The Reeb vector field}

The \emph{almost contact metric structure} on a smooth differentiable manifold $(M^{2n+1},g)$  of dimension $2n + 1$ consists of a unit vector field $\xi$, $(1,1)$ tensor field   $\varphi$ and  1-form  $\eta$ such that
\begin{equation}\label{Basic}
\varphi^2=-I+\eta\otimes\xi,\quad \varphi \xi = 0, \quad \eta(\xi)=1, \quad \eta \circ \varphi = 0
\end{equation}
\begin{equation}\label{BasicMetric}
g(\vf X,\vf Y)=g(X,Y)-\eta(X)\eta(Y),\quad \eta(X)=g(X,\xi)
\end{equation}
for all vector fields on the manifold.
It easily follows that $\vf$ is skew symmetric
\begin{equation}\label{phi_prop}
g(\varphi X, Y)=-g(X, \varphi Y), \quad  \varphi^t=-\varphi
\end{equation}
and orthogonal being restricted on $\ker \eta=\xi^\perp$
\begin{equation}\label{orth_Op}
(\varphi^t\varphi)X =(\varphi\varphi^t)X=X
\end{equation}
for any $X\in \ker \eta$.

The unit vector field  $\xi$ is called  \emph{characteristic} or  the   \emph{Reeb} vector field of contact metric manifold. The almost contact metric structure is denoted by $(\vf,\xi,\eta,g)$.
An almost contact metric structure  $(\vf,\xi,\eta,g)$ on  is called a $(\alpha,\beta)$-\textit{trans Sasakian}  structure \cite{Ob}  if
\begin{equation}\label{Trans_Sasaki}
(\nabla_X \varphi)Y =\alpha(g(X,Y)\xi - \eta(Y)X)+ \beta (g(\varphi X,Y)\xi-\eta(Y)\varphi X)
\end{equation}
for some smooth functions $\alpha, \beta :M^{2n+1}\to R$. The Structure is called {\emph{Sasakian}} if $\alpha=1$, $\beta=0$ ; {\emph{Kenmotsu}} if  $\alpha=0,\beta=1$; {\emph{cosymplectic}} if  $\alpha=0,\beta=0$.

It worthwhile to mention that for $ dim(M)\geq 5$ the trans-Sasakian manifold is either $\alpha$-Sasakian with $\alpha=const$  or $\beta$-Kenmotsu \cite{Marrero}.
\begin{theorem}\label{ReebTG} \cite{Ym-2022} The Reeb vector field on connected $(\alpha,\beta)$ trans-Sasakian manifold $M$  gives rise to totally geodesic submanifold
 $\xi(M)\subset (T_1M,g_S)$ only in the following cases
\begin{itemize}
\item $\beta=0$, $\alpha=1$ and hence $M$ is Sasakian or $\alpha=0$ and hence $M$ is cosymplectic;
\item $\alpha=0$ and $\nabla\beta=\dfrac{\beta^2(\beta^2+1)}{1-\beta^2}\xi$. If $\beta=const$ or $M$ is compact, then $\beta=0$ and hence $M$ is cosymplectic.
\end{itemize}
\end{theorem}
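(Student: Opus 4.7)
The plan is to substitute the Reeb vector field $\xi$ into the totally geodesic condition \eqref{TGF} and to read off the values of $(\alpha,\beta)$ that make it an identity. Since \eqref{TGF} is symmetric and bilinear in $X,Y$, I would test it on a local $\varphi$-adapted frame $\{e_1,\ldots,e_{2n},\xi\}$ with $e_{n+i}=\varphi e_i$, and project both sides onto $\xi$ and onto $\xi^\perp=\ker\eta$ separately.

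The first computation is the Nomizu operator. Setting $Y=\xi$ in \eqref{Trans_Sasaki}, using $\varphi\xi=0$ and $\eta(\xi)=1$, and then applying $\varphi$ to the resulting identity gives
\begin{equation*}
\nabla_X\xi = -\alpha\,\varphi X+\beta\bigl(X-\eta(X)\xi\bigr),\qquad A_\xi X=\alpha\,\varphi X-\beta\bigl(X-\eta(X)\xi\bigr).
\end{equation*}
Next I would compute $(\nabla_X A_\xi)Y$ by differentiating this expression and using \eqref{Trans_Sasaki} once more to eliminate $\nabla_X\varphi$; derivatives $X(\alpha),X(\beta)$ together with cross-terms in $\alpha\beta,\alpha^2,\beta^2$ appear naturally, and symmetrisation in $X,Y$ yields $Hess_\xi(X,Y)$. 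Simultaneously $A_\xi^{\,t}A_\xi$ is evaluated directly from \eqref{Basic}--\eqref{orth_Op}, giving $g(A_\xi X,A_\xi Y)=(\alpha^2+\beta^2)\bigl(g(X,Y)-\eta(X)\eta(Y)\bigr)$, so the third term of \eqref{TGF} is explicit. Finally I would compute $R(\xi,A_\xi X)Y$ from the standard trans-Sasakian formula for $R(X,Y)\xi$ obtained by differentiating the expression for $\nabla_X\xi$ above; after a Bianchi rearrangement this delivers $Hm_\xi(X,Y)$, and pre-composing with $A_\xi$ furnishes the second term of \eqref{TGF}.

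Assembling the three pieces and projecting onto $\xi^\perp$ reduces \eqref{TGF} to a polynomial relation in $\alpha,\beta$ plus linear contributions from $\nabla\alpha,\nabla\beta$. For $\dim M\geq 5$ the Marrero rigidity \cite{Marrero} quoted above restricts $(\alpha,\beta)$ to an $\alpha$-Sasakian structure with $\alpha$ constant or to a $\beta$-Kenmotsu structure. On the Sasakian branch the polynomial part forces $\alpha^2=\alpha$, hence $\alpha\in\{0,1\}$, giving the Sasakian and cosymplectic cases of the first bullet. On the Kenmotsu branch ($\alpha=0$) the $\xi$-projection of \eqref{TGF} produces exactly the ODE $\nabla\beta=\beta^2(\beta^2+1)/(1-\beta^2)\,\xi$ of the second bullet. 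If $\beta$ is constant the right-hand side can vanish only for $\beta=0$; if $M$ is compact, pairing the ODE with $\xi$ and integrating by parts using $\operatorname{div}\xi=-2n\beta$ on a $\beta$-Kenmotsu manifold forces $\beta\equiv0$. For $\dim M=3$ the Marrero restriction is unavailable, so genuinely mixed $(\alpha,\beta)$ have to be ruled out directly: the skew-symmetric ($\varphi$-linear) and symmetric parts of the reduced system decouple, and the cross-terms $\alpha\beta$ and $\alpha\nabla\beta$ can vanish only when $\alpha=0$ or $\beta=0$, returning us to the same two families.

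The main obstacle is the curvature bookkeeping inside $R(\xi,A_\xi X)Y$: one has to track carefully how $\nabla\alpha$ and $\nabla\beta$ enter and keep them separated from the Hessian contribution, so that the delicate cancellation producing the denominator $(1-\beta^2)$ in the final ODE survives. Without care the $\nabla\alpha\otimes\varphi$ cross-terms and the $\beta\nabla\beta$ terms can recombine in several superficially different ways, and a sharp Bianchi-type identity on the trans-Sasakian curvature is needed to collapse them into the clean form stated in the theorem.
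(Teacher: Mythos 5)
First, a point of comparison: this paper does not prove Theorem~\ref{ReebTG} at all --- it is quoted from \cite{Ym-2022} with no in-paper argument --- so your proposal can only be judged against what such a proof must contain. Your strategy is the right one and is, in substance, the method of the cited source: derive $\nabla_X\xi=-\alpha\varphi X+\beta(X-\eta(X)\xi)$ from \eqref{Trans_Sasaki}, hence $A_\xi X=\alpha\varphi X-\beta(X-\eta(X)\xi)$, substitute into the totally geodesic criterion \eqref{TGF}, and split into $\xi$- and $\xi^\perp$-components; the formula for $A_\xi$ and the identity $g(A_\xi X,A_\xi Y)=(\alpha^2+\beta^2)\bigl(g(X,Y)-\eta(X)\eta(Y)\bigr)$ are both correct.

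However, as written this is a plan rather than a proof, and the gap sits exactly where the theorem's content is. The computations you defer --- $Hess_\xi$, $Hm_\xi$, and their assembly --- are the whole argument, and you explicitly flag the derivation of $\nabla\beta=\beta^2(\beta^2+1)(1-\beta^2)^{-1}\xi$ as the ``main obstacle''; but that identity \emph{is} the second bullet, so deferring it leaves the theorem unestablished. Several of the steps you do commit to are also imprecise. On the $\alpha$-Sasakian branch ($A_\xi=\alpha\varphi$ with $\alpha$ constant) the residual of \eqref{TGF} on a pair $(\xi,Y)$ with $Y\perp\xi$ works out to $\tfrac12\alpha^2(\alpha^2-1)Y$, so the constraint is $\alpha^2(\alpha^2-1)=0$ (i.e.\ $\alpha\in\{0,\pm1\}$), not ``$\alpha^2=\alpha$''. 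On the $\beta$-Kenmotsu branch one has $\operatorname{div}\xi=+2n\beta$, not $-2n\beta$, and the integration by parts you propose yields $\int_M\beta^2\bigl(\tfrac{\beta^2+1}{1-\beta^2}+2n\bigr)=0$, whose integrand is not sign-definite where $\beta^2>1$; the compact case is closed more safely by evaluating the ODE at the extrema of $\beta$, where $\nabla\beta=0$ forces $\beta^2(\beta^2+1)=0$ and hence $\beta=0$. Finally, the three-dimensional case, where Marrero's rigidity is unavailable and the mixed $(\alpha,\beta)$ terms must be eliminated by hand, is only gestured at.
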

\begin{corollary} (see also \cite{Ym-2003})
The \emph{Hopf unit vector field} on every odd-dimensional unit sphere $S^{2m+1}$ ($m\geq1$) is  \emph{totally geodesic}.
\end{corollary}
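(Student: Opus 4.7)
The plan is to reduce the corollary to the first case of Theorem \ref{ReebTG}. Concretely, I would identify the Hopf unit vector field on $S^{2m+1}$ as the Reeb vector field of the standard Sasakian structure carried by the odd-dimensional round sphere, and then quote Theorem \ref{ReebTG} with $\alpha=1$, $\beta=0$.

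First I would recall the construction of this Sasakian structure. View $S^{2m+1}\subset\mathbb{C}^{m+1}$ with outward unit normal $N$, and let $J$ denote the standard complex structure on $\mathbb{C}^{m+1}$. Set $\xi=-JN$; this is the classical Hopf vector field, tangent to the fibers of the Hopf fibration. Define the tensor $\varphi$ on $T_pS^{2m+1}$ by $\varphi X=JX+g(JX,N)N$ (the tangential part of $JX$), and put $\eta(X)=g(X,\xi)$, with $g$ the round metric. A direct check shows that $(\varphi,\xi,\eta,g)$ satisfies the algebraic identities \eqref{Basic}--\eqref{BasicMetric}, so it is an almost contact metric structure whose characteristic field is precisely the Hopf field.

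Next I would verify that this structure is Sasakian in the sense of \eqref{Trans_Sasaki}, i.e.\ that
$$
(\nabla_X\varphi)Y=g(X,Y)\xi-\eta(Y)X,
$$
which corresponds to $\alpha=1$, $\beta=0$. This is the well-known computation using the Gauss formula on $S^{2m+1}$: since $\nabla^{\mathbb{C}^{m+1}}J=0$ and the second fundamental form of $S^{2m+1}\subset\mathbb{C}^{m+1}$ is $-g(\cdot,\cdot)N$, differentiating $\varphi X=JX+g(JX,N)N$ on the ambient side and projecting tangentially yields exactly the Sasakian relation. I would cite Blair's standard reference rather than recompute it in detail.

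With the Hopf field identified as the Reeb field of an $(\alpha,\beta)=(1,0)$ trans-Sasakian structure, the first bullet of Theorem \ref{ReebTG} applies directly, so $\xi(S^{2m+1})$ is a totally geodesic submanifold of $(T_1S^{2m+1},g_S)$, which is precisely the totally geodesic property of the Hopf unit vector field. There is no real obstacle here; the only point that needs care is the orientation/sign convention in defining $\xi=-JN$, since the opposite sign would produce an $(\alpha,\beta)=(-1,0)$ structure, but this is covered by the same bullet up to replacing $\xi$ by $-\xi$, which does not affect the image submanifold in $T_1M$.
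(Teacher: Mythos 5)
Your proposal is correct and takes essentially the same route the paper intends: the corollary is stated as an immediate consequence of Theorem 1, obtained by recognizing the round sphere $S^{2m+1}$ as a Sasakian manifold (the case $\alpha=1$, $\beta=0$) whose Reeb vector field is the Hopf field, which is exactly what you do. The paper supplies no further argument, so your explicit verification of the Sasakian structure via the Gauss formula, and the remark on the sign convention for $\xi=-JN$, only fill in details the author leaves to the reader (and to the cited reference).
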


\begin{theorem}\label{ReebH} \cite{Ym-2022} The norm of the  mean curvature vector $H_\xi$ for  the Reeb vector field $\xi$ on $(\alpha,\beta)$ trans-Sasakian manifold of $\dim M=2n+1$ is of the form
\begin{itemize}
\item if $\dim(M)>3$, then
$$
\begin{array}{l}
|H_\xi|=\frac{(1+(2n-1)\alpha^2)}{(2n+1)(1+\alpha^2)^{3/2}}|\vf^2\nabla\alpha|=0 \quad (\alpha=const, \beta=0);
\\[1ex]
|H_\xi|=\frac{(1+(2n-1)\beta^2)}{(2n+1)(1+\beta^2)^{3/2}}|\vf\bar\Delta\xi| \quad (\alpha=0),
\\[1ex]
\mbox{where $\bar\Delta$ stands for rough Laplacian;}
\end{array}
$$
\item if $\dim(M)=3$,  then
$$
|H_\xi|=\frac{|\vf^2\nabla\alpha-\vf\nabla\beta|}{3\sqrt{1+\alpha^2+\beta^2}}.
$$
\end{itemize}
\end{theorem}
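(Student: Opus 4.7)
The plan is to apply the general mean curvature formula \eqref{H} to $V=\xi$, exploiting the fact that the Nomizu operator $A_\xi$ has an especially symmetric singular value decomposition on a trans-Sasakian manifold. From \eqref{Trans_Sasaki} together with $\eta(X)=g(X,\xi)$ and $\eta\circ\vf=0$ one derives the standard identity $\nabla_X\xi=-\alpha\vf X-\beta\vf^2 X$, so $A_\xi=\alpha\vf+\beta\vf^2$. A direct computation with \eqref{phi_prop} and \eqref{orth_Op} shows $|A_\xi X|^2=(\alpha^2+\beta^2)|X|^2$ for every $X\in\xi^\perp$ while $A_\xi\xi=0$, so every nonzero singular value of $A_\xi$ equals $\sqrt{\alpha^2+\beta^2}$ and the associated $f_\alpha$ form an orthonormal basis of $\xi^\perp$. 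Plugging this into \eqref{H} makes all denominators $1+\sigma_i^2$ collapse to the common factor $1+\alpha^2+\beta^2$, while the sum $\sum_\alpha g(\check H_\xi,f_\alpha)^2$ becomes the squared norm of the $\xi^\perp$-projection, yielding
\begin{equation*}
|H_\xi|=\frac{\bigl|\bigl(\bar\Delta\xi+\sum_i A_\xi R(\xi,A_\xi e_i)e_i\bigr)^\perp\bigr|}{(2n+1)(1+\alpha^2+\beta^2)^{3/2}}.
\end{equation*}

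Next I would evaluate the two summands in the numerator. Using $A_\xi=\alpha\vf+\beta\vf^2$ and the trans-Sasakian expression for $(\nabla_X\vf)Y$, a routine orthonormal-frame computation produces $\bar\Delta\xi=\vf\nabla\alpha+\vf^2\nabla\beta+2n(\alpha^2+\beta^2)\xi$, whose $\xi^\perp$-part is simply $\vf\nabla\alpha+\vf^2\nabla\beta$. For the curvature sum I would first obtain a closed-form expression for $R(X,Y)\xi$ by differentiating the Nomizu identity a second time, and then use pair-symmetry $g(R(\xi,A_\xi e_i)e_i,Z)=g(R(e_i,Z)A_\xi e_i,\xi)$ of the Riemann tensor to collapse the index sum via $\sum_i g(X,e_i)g(Y,e_i)=g(X,Y)-\eta(X)\eta(Y)$ on $\xi^\perp$. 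This reduces the curvature contribution to a combination of $\nabla\alpha$, $\nabla\beta$, $\xi(\alpha)\xi$, $\xi(\beta)\xi$, and the $\xi^\perp$-projection carries precisely the $(2n-1)$ factors appearing in the statement.

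The remainder is case analysis. For $\dim M>3$, Marrero's theorem forces either $\beta=0$ with $\alpha$ constant or $\alpha=0$, so only one gradient survives and the curvature sum contributes $(2n-1)\alpha^2\vf\nabla\alpha$ in the Sasakian case or $(2n-1)\beta^2\vf^2\nabla\beta$ in the Kenmotsu case; adding $(\bar\Delta\xi)^\perp$ yields the coefficients $1+(2n-1)\alpha^2$ and $1+(2n-1)\beta^2$, and the identity $|\vf X|=|\vf^2 X|$ on $\xi^\perp$, combined with $\vf\bar\Delta\xi=-\vf\nabla\beta$ in the Kenmotsu case, rewrites the norms in the stated forms. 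For $\dim M=3$ both $\alpha,\beta$ may be nonconstant, but the explicit curvature computation evaluates to $(\alpha^2+\beta^2)(\vf\nabla\alpha+\vf^2\nabla\beta)$; combined with $(\bar\Delta\xi)^\perp$ this produces a single factor $1+\alpha^2+\beta^2$, cancelling one power in the denominator, and applying $\vf$, an isometry on $\xi^\perp$, transforms the numerator $\vf\nabla\alpha+\vf^2\nabla\beta$ into $\vf^2\nabla\alpha-\vf\nabla\beta$ without changing its norm, giving the claimed expression.

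The hard part is the curvature computation: deriving $R(X,Y)\xi$ in the full $(\alpha,\beta)$ setting demands careful bookkeeping of $(\nabla_X\vf)Y-(\nabla_Y\vf)X$ and its $\vf^2$-analogue, and verifying that the three-dimensional cross-terms conspire to produce the symmetric combination $\vf\nabla\alpha+\vf^2\nabla\beta$, rather than an asymmetric mixture, is the principal algebraic challenge.
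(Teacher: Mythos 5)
Your proposal is correct; the paper itself only imports Theorem \ref{ReebH} from \cite{Ym-2022} without reproducing a proof, and your route --- deriving $A_\xi=\alpha\vf+\beta\vf^2$ from \eqref{Trans_Sasaki}, observing that every nonzero singular value equals $\sqrt{\alpha^2+\beta^2}$ so that \eqref{H} collapses to $|H_\xi|=\bigl|\bigl(\bar\Delta\xi+\sum_i A_\xi R(\xi,A_\xi e_i)e_i\bigr)^\perp\bigr|\big/\bigl((2n+1)(1+\alpha^2+\beta^2)^{3/2}\bigr)$, and then evaluating the two numerator terms --- is precisely the machinery this paper sets up and evidently the intended argument. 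The key intermediate claims check out: $\bar\Delta\xi=\vf\nabla\alpha+\vf^2\nabla\beta+2n(\alpha^2+\beta^2)\xi$, and the curvature sum (computed via $R(X,Y)\xi=-(\nabla_XA_\xi)Y+(\nabla_YA_\xi)X$ and pair symmetry, as you propose) contributes $[(2n-1)\alpha^2+\beta^2]\vf\nabla\alpha+[\alpha^2+(2n-1)\beta^2]\vf^2\nabla\beta+(2n-2)\alpha\beta(\vf^2\nabla\alpha+\vf\nabla\beta)$ after applying $A_\xi$, whose cross term vanishes exactly when $n=1$, reproducing the stated coefficients in all three cases.
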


\section{Minimal unit vector fields on oscillator group}

Harmonic maps are generalizations of locally minimal isometric immersion. The pulled back metric \eqref{MetricVF} on the submanifold $V(M)\subset (T_1M,g_S)$ manifests that mapping
$V:M\to (T_1M,g_S)$ is isometry if and only if $V$ is a parallel unit vector field. It means that in general one can not expect that a unit vector field which defines a harmonic map is minimal. Surprisingly, there are examples when it happens. For example,  J. C. Gonz\'alez-D\'avila and   L.  Vanhecke \cite{GDVh} proved that a left-invariant unit vector field on a three-dimensional  unimodular Lie group is harmonic if and only if it is minimal; for generalized Heisenbrg group $H(n,1)\  (n\geq 2)$, the set of left-invariant harmonic unit vector fields is given by ${\pm \xi},S\cap \xi^\perp$, where $\xi$ is a distinguished unit vector field and $S\cap \xi^\perp$ is a set of unit fector fields in orthogonal complement of $\xi$,  and they all determine harmonic maps into the unit tangent bundle. The same authors proved that left-invariant vectors of canonical frame $\xi, e_1, \ldots, e_n$ on $H(n,1)$ are minimal as well \cite{GD-Vh}. Recently   Na Xu and Ju Tan \cite{Xu-Tan} find all the left-invariant harmonic unit vector fields on the oscillator
groups. Besides, they determine the associated harmonic maps from the oscillator group into its unit tangent bundle equipped with the associated Sasaki metric. Our main goal is to understand if invariant unit vector fields
on oscillator group which defines a harmonic map can be minimal.

 {The oscillator group} $G_n(\lambda) = G(\lambda_1, \dots , \lambda_n)$ is the connected simply connected
solvable Lie group whose Lie algebra $\mathfrak{g}_n(\lambda)$ is the oscillator algebra
$\mathfrak{g}_n(\lambda) = (\lambda_1, \dots , \lambda_n)$ which is linearly spanned by (2n + 2)-elements
\begin{equation}\label{Frame}
e_1, \dots , e_n; e_{n+1}, \dots , e_{2n}; e_{2n+1}=\xi , e_{2n+2}=\zeta.
\end{equation}
Denote
$$\mathcal{X}=Span(e_1, \dots , e_n), \mathcal{Y}=Span(e_{n+1}, \dots , e_{2n}), \mathcal{P}=Span(\xi), \mathcal{Q}=Span(\zeta). $$
According to \eqref{Frame} ve have the following decomposition:
$$
\mathfrak{g}_n(\lambda)=\mathcal{X}\oplus \mathcal{Y}\oplus \mathcal{P}\oplus \mathcal{Q}.
$$
Let
$$
V=\sum a_i e_i +\sum a_{n+i} e_{n+i} +a_{2n+1} \xi  +a_{2n}\zeta
$$
be a left invariant unit vector fiend on the oscillator group $G_n(\lambda)$. Then
$$
V=V_\mathcal{X}+V_\mathcal{Y}+a_{2n+1} \xi  +a_{2n}\zeta
$$
Introduce the following linear operators and formes similar to ones that are used in an of almost contact geometry:
$$
\vf (V)=\sum( -a_{n+i} e_i +a_i e_{n+i}),    \quad \eta(V)= g(V,\xi)=a_{2n+1},  \quad \theta(V)=g(V,\zeta)=a_{2n} .
$$
As a straightforward consequences of the definitions we get
\begin{equation}\label{Propts}
\begin{array}{l}
 \vf^2=-I+\eta\otimes\xi+\theta\otimes \zeta \quad \vf\xi=0,\quad \vf\zeta=0,\quad \eta\vf=\theta\vf=0, \\[1ex]
\eta(X)=g(X,\xi),\quad  \theta(X)=g(\zeta,X),\\[1ex]
\eta(\xi)=1, \quad \eta(\zeta)=0, \quad \theta(\zeta)=1,\quad  \theta(\xi)=0, \\[1ex]
g(\vf X,\vf Y)=g(X,Y)-\eta(X)\eta(Y)-\theta(X)\theta(Y).
\end{array}
\end{equation}
It easily follows that $\vf$ is skew symmetric
\begin{equation}\label{phi_prop2}
g(\varphi X, Y)=-g(X, \varphi Y), \quad  \varphi^t=-\varphi
\end{equation}
and orthogonal being restricted on $\mathcal{X}\oplus \mathcal{Y}$
\begin{equation}\label{orth_Op}
(\varphi^t\varphi)X =(\varphi\varphi^t)X=X
\end{equation}
for any $X\in \mathcal{X}\oplus \mathcal{Y}$. Relative to the introduced notations,
\begin{equation}\label{VF}
V=V_\mathcal{X}+V_\mathcal{Y} +\eta(V) \xi  +\theta(V)\zeta.
\end{equation}
Remark, also, that the canonical frame \eqref{Frame} is such that
$$e_{n+i}=\vf(e_i), \ \ (i=1,\ldots, n).
$$

\begin{lemma}\label{A}
The Nomizu operator for arbitrary vector field $V$  in case of oscillator group can be given in invariant form as follows:
$$
A_V=\frac12 \big(\eta(V)\vf +\vf(V)\eta+\vf^\flat (V)\xi-2\mathbf{E}_\lambda \vf(V) \theta\big),
$$
where $\vf^\flat=g(\cdot,\vf(V))$ ,  $\mathbf{E}_\lambda=diag(\lambda_1,\ldots,\lambda_n;\lambda_1,\ldots,\lambda_n)$.

\end{lemma}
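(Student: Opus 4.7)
The approach is to reduce the claim to a direct algebraic check on the left-invariant frame \eqref{Frame}, using the fact that for a left-invariant metric on a Lie group the Levi-Civita connection on left-invariant vector fields is purely algebraic data computed by Koszul's formula
$$
2g(\nabla_X Y, Z) = g([X,Y], Z) - g([Y, Z], X) - g([X, Z], Y).
$$
Since the proposed expression is linear in $V$ and in $X$, it suffices to verify it when $X$ is each of the four types of basis vector $e_j,\ e_{n+j},\ \xi,\ \zeta$, after recalling the brackets of $\mathfrak{g}_n(\lambda)$: the central generator with $[e_i,e_{n+i}] = \xi$, the semisimple generator with $[\zeta,e_i] = \lambda_i e_{n+i}$ and $[\zeta,e_{n+i}] = -\lambda_i e_i$, and all remaining brackets zero.

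The first step is to tabulate the nonzero covariant derivatives on the orthonormal frame. A short application of Koszul's formula yields $\nabla_X\zeta = 0$ for every basis $X$, together with
$$
\nabla_\zeta e_i = \lambda_i e_{n+i},\qquad \nabla_\zeta e_{n+i} = -\lambda_i e_i,
$$
$$
\nabla_\xi e_i = \nabla_{e_i}\xi = -\tfrac12 e_{n+i},\qquad \nabla_\xi e_{n+i} = \nabla_{e_{n+i}}\xi = \tfrac12 e_i,
$$
$$
\nabla_{e_i} e_{n+i} = \tfrac12\xi,\qquad \nabla_{e_{n+i}} e_i = -\tfrac12\xi,
$$
all the rest vanishing. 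This is the only computational content of the argument, and its length is essentially the size of this table.

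The second step is matching. Writing $V = V_\mathcal{X}+V_\mathcal{Y}+\eta(V)\xi+\theta(V)\zeta$ with constant coefficients in the left-invariant frame and expanding $A_V X = -\nabla_X V$ for each of the four types of $X$, one recognises four distinct sources of contributions: (i) $\nabla_\xi V$ produces a $-\tfrac12\vf(V)$ term, matching $\tfrac12\eta(X)\vf(V)$ when $X = \xi$; (ii) $\nabla_{e_j}\xi$ and $\nabla_{e_{n+j}}\xi$ combine with the $\eta(V)\xi$-component of $V$ into $\tfrac12\eta(V)\vf(X)$; (iii) the Heisenberg-type derivatives $\nabla_{e_i}e_{n+i} = \tfrac12\xi$ and $\nabla_{e_{n+i}}e_i = -\tfrac12\xi$ collect the $\mathcal{X}\oplus\mathcal{Y}$-parts of $V$ against $X$ into the 1-form $\tfrac12\vf^\flat(V)$ with values in $\mathcal{P}$, giving the $\tfrac12\vf^\flat(V)\xi$ piece; (iv) $\nabla_\zeta V_\mathcal{X}$ and $\nabla_\zeta V_\mathcal{Y}$ assemble into $\mathbf{E}_\lambda\vf(V)$ by the definition of $\mathbf{E}_\lambda$, contributing $-\theta(X)\mathbf{E}_\lambda\vf(V)$ when $X = \zeta$.

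The one point that requires care, and is the only real obstacle, is the bookkeeping for the $\lambda_i$-dependent block: one must verify that $\nabla_\zeta(V_\mathcal{X}+V_\mathcal{Y}) = \mathbf{E}_\lambda\vf(V)$ holds with the correct global sign on both $\mathcal{X}$ and $\mathcal{Y}$, using $\vf(e_{n+i}) = -e_i$. Once that identity is established and the 1-form $\vf^\flat(V) = g(\,\cdot\,,\vf V)$ is identified as the correct pairing produced by the central bracket, the four case checks are immediate and assemble into exactly the invariant expression stated in the lemma.
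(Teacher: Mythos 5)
Your proposal is correct and follows essentially the same route as the paper: compute the Levi-Civita connection on the left-invariant frame via Koszul's formula (your table agrees with the paper's \eqref{CDcoord}), expand $A_VX=-\nabla_XV$ by bilinearity over the four types of basis vectors, and regroup the contributions into the terms $\tfrac12\eta(V)\vf$, $\tfrac12\vf(V)\eta$, $\tfrac12\vf^\flat(V)\xi$ and $-\mathbf{E}_\lambda\vf(V)\theta$. The sign bookkeeping you flag as the delicate point does check out: $\nabla_\zeta(V_\mathcal{X}+V_\mathcal{Y})=\sum_i\lambda_i(a_ie_{n+i}-a_{n+i}e_i)=\mathbf{E}_\lambda\vf(V)$, exactly as in the paper's computation.
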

\begin{proof}
The  non-vanishing Lie brackets of the frame \eqref{Frame} are \cite{Biggs}, \cite{Xu-Tan}:
\begin{equation}\label{Bk}
[e_i, e_{n+j}] = \delta_{ij}\xi , \ [\zeta , e_j] = \lambda_je_{n+j}, \
[\zeta , e_{n+j}] = -\lambda_je_j,\  ( i, j =1\dots, n).
\end{equation}

Using Koszul formula  and  \eqref{Bk}, one can determine the Levi-Civita connection on $G_{n}(\lambda)=$ $G\left(\lambda_{1}, \ldots, \lambda_{n}\right)$ as follows \cite{Xu-Tan}:
\begin{equation}\label{CDcoord}
\begin{array}{llll}
\nabla_{\xi } \xi =0, & \quad \nabla_{\xi } \zeta =0, &\quad \nabla_{\xi } e_{j}=-\frac{1}{2} e_{n+j}, & \quad\nabla_{\xi } e_{n+j}=\frac{1}{2} e_{j}, \\[1ex]
 \nabla_{\zeta } \xi =0, & \quad\nabla_{\zeta } \zeta =0, &\quad \nabla_{\zeta } e_{j}=\lambda_{j} e_{n+j}, &\quad \nabla_{\zeta } e_{n+j}=-\lambda_{j} e_{j}, \\[1ex]
\nabla_{e_{j}} \xi =-\frac{1}{2} e_{n+j}, &\quad \nabla_{e_{j}} \zeta =0, & \quad\nabla_{e_{j}} e_{k}=0, & \quad\nabla_{e_{j}} e_{n+k}=\frac{1}{2} \delta_{j k} \xi , \\[1ex]
 \nabla_{e_{n+j}} \xi =\frac{1}{2} e_{j}, & \quad\nabla_{e_{n+j}} \zeta =0, &\quad \nabla_{e_{n+j}} e_{k}=-\frac{1}{2} \delta_{j k} \xi , &\quad \nabla_{e_{n+j}} e_{n+k}=0 .
\end{array}
\end{equation}
where $\delta_{j k}$ is the  Kronecker symbol and  $1 \leqslant j, k \leqslant n$,

For a left invariant vector field $$V=\sum_{i=1}^{n}\left(a_{i} e_{i}+a_{n+i} e_{n+i}\right)+a_{2 n+1} \xi+a_{2 n+2} \zeta$$ on $G_{n}(\lambda)$
we have \cite{Xu-Tan}
$$
\begin{array}{ll}
\nabla_{e_{j}} V  =\frac{1}{2} a_{n+j} \xi -\frac{1}{2} a_{2 n+1} e_{n+j}, &\quad j=1,2, \ldots, n \\[1ex]
\nabla_{e_{n+j}} V =-\frac{1}{2} a_{j} \xi +\frac{1}{2} a_{2 n+1} e_{j}, & \quad j=1,2, \ldots, n \\[1ex]
\nabla_{\xi } V  =\frac{1}{2} \sum_{i=1}^{n}\left(a_{n+i} e_{i}-a_{i} e_{n+i}\right),& \\[1ex]
\nabla_{\zeta } V  =\sum_{i=1}^{n} \lambda_{i}\left(a_{i} e_{n+i}-a_{n+i} e_{i}\right)& .
\end{array}
$$
If
$
Z=\sum_{j=1}^{n}\left(z_{j} e_{j}+z_{n+j} e_{n+j}\right)+z_{2 n+1} \xi+z_{2 n+2} \zeta
$
is another left  invariant vector field, then
\begin{multline*}
\nabla_Z V=\sum_{j=1}^n(z_j \nabla_{e_{j}} V+z_{n+j} \nabla_{e_{n+j}} V)+z_{2n+1}\nabla_{\xi } V+z_{2n+2} \nabla_{\zeta } V=\\
\sum_{j=1}^n\left(\frac{1}{2}z_j  (a_{n+j} \xi -a_{2 n+1} e_{n+j})-\frac{1}{2}z_{n+j} ( a_{j} \xi - a_{2 n+1} e_{j})\right)+\\
\frac{1}{2}z_{2n+1} \sum_{j=1}^{n}\left(a_{n+j} e_{j}-a_{j} e_{n+j}\right)+z_{2n+2} \sum_{j=1}^{n} \lambda_{j}\left(a_{j} e_{n+j}-a_{n+j} e_{j}\right)=\\
-\frac12 a_{2n+1} \sum_{j=1}^n(-z_{n+j} e_j +z_j e_{n+j}   )-\frac12 z_{2n+1}\sum_{j=1}^{n}\left(-a_{n+j} e_{j}+a_{j} e_{n+j}\right) +\\
z_{2n+2} \sum_{j=1}^{n} \lambda_{j}\left(-a_{n+j} e_{j}+a_{j} e_{n+j}\right)+\frac{1}{2}\sum_{j=1}^n(-z_{n+j} a_{j}+ z_j  a_{n+j}) \xi=\\
-\frac12 \eta(V)\vf(Z) -\frac12\eta(Z)\vf(V)+\theta(Z)\,\mathbf{E}_\lambda \vf(V)+\frac12 g(V,\vf(Z)\xi.
\end{multline*}
So, we have
\begin{multline}\label{CD}
A_VZ=-\nabla_ZV=\\
\frac12 \eta(V)\vf(Z) +\frac12\eta(Z)\vf(V)-\theta(Z)\,\mathbf{E}_\lambda \vf(V)+\frac12 g(\vf(V),Z)\xi
\end{multline}
which completes the proof.
\end{proof}
\begin{corollary} As a Riemannian manifold, the oscillator group $G_n(\lambda)$ splits into metric product $H(n,1)\times I\!\!R$, where $H(n,1)$ is Heisenberg group and $I\!\!R$, and hence, $H(n,1)$ is totally geodesic submanifold in oscillator group $G_n(\lambda)$.
\end{corollary}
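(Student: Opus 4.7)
The plan is to exhibit $\zeta$ as a parallel unit vector field on $G_n(\lambda)$ and then invoke the de Rham decomposition theorem. First I would apply Lemma~\ref{A} with $V=\zeta$: since $\eta(\zeta)=0$, $\theta(\zeta)=1$ and $\vf(\zeta)=0$, every one of the four summands in the invariant expression
$$
A_V=\tfrac12\bigl(\eta(V)\vf+\vf(V)\eta+\vf^\flat(V)\xi-2\mathbf{E}_\lambda\vf(V)\theta\bigr)
$$
vanishes identically, giving $A_\zeta=0$. Equivalently, $\nabla_X\zeta=0$ for every $X$, which is also visible directly in the Koszul table \eqref{CDcoord} (every entry of the column $\nabla_{\cdot}\zeta$ is zero). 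Thus $\zeta$ is a parallel unit left-invariant vector field.

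Next, the parallel line bundle $\mathcal{Q}=\mathrm{Span}(\zeta)$ and its orthogonal complement $\zeta^\perp=\mathcal{X}\oplus\mathcal{Y}\oplus\mathcal{P}$ furnish a $\nabla$-invariant orthogonal decomposition of $TG_n(\lambda)$ into parallel subbundles. From the brackets \eqref{Bk}, the only nonzero commutator among $e_1,\ldots,e_{2n},\xi$ is $[e_i,e_{n+j}]=\delta_{ij}\xi$, so $\zeta^\perp$ is a Lie subalgebra — precisely the Heisenberg algebra $\mathfrak{h}(n,1)$ — whose connected Lie subgroup is $H(n,1)$. Since $G_n(\lambda)$ is simply connected and its left-invariant metric is complete, the de Rham theorem upgrades the parallel orthogonal splitting to a global Riemannian product $G_n(\lambda)\cong H(n,1)\times\mathbb{R}$, the first factor being the integral leaf of $\zeta^\perp$ through the identity and the second the one-parameter subgroup generated by $\zeta$. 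The Heisenberg factor is then totally geodesic as a factor of a metric product.

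The only conceptually delicate step is the de Rham reduction, but here it is essentially automatic: completeness and simple connectedness are given, the parallel distributions are smooth global subbundles, and their integral leaves are identifiable at the Lie-algebra level. So the real content of the corollary is the one-line computation $A_\zeta=0$ obtained from Lemma~\ref{A}; everything else is bookkeeping.
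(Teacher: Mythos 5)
Your proposal is correct and follows essentially the same route as the paper: both reduce the corollary to the one-line computation $A_\zeta=0$ via the Nomizu operator (equivalently the Koszul table) and then pass from the parallel unit field $\zeta$ to the Riemannian product splitting. You merely spell out the de Rham step (simple connectedness, completeness, identification of $\zeta^\perp$ with the Heisenberg algebra) that the paper leaves implicit.
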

\begin{proof}
It is known  that, as a group,  the oscillator group is a semidirect product  $G_\lambda=H(n,1)\rtimes I\!\!R$ of $(2n+1)$-dimensional Heizenberg group $H(n,1)$ and one-dimensional abelian group $I\!\! R$ with Lie algebra $\zeta$.  Plugging $V=\zeta$ into \eqref{CD} and using \eqref{Propts} we get $\nabla_Z\zeta=0$ for all $Z$, and $\nabla_\zeta\zeta=0$. It means that $G_\lambda$ admits a parallel vector field $\zeta$. As a consequence, $G_\lambda$ is a Riemannian product of Heisenberg group $H(n,1)$ and 1-dimensional geodesic $\zeta$-foliation.
\end{proof}
\begin{lemma}\label{A_V}
With respect to canonical frame \eqref{Frame}, matrix of the Nomizu operator for the left invariant vector field $V=V_\mathcal{X}+V_\mathcal{Y} +\eta(V) \xi  +\theta(V)\zeta $ is of the form
$$
 A_V =
\left( \begin{array}{cccc}
0 & -\frac{1}{2}\ \eta(V)I_n & -\frac{1}{2}V_\mathcal{Y} & E_{\lambda }V_\mathcal{Y} \\[1ex]
\frac{1}{2}\ \eta(V)I_n & 0 & \frac{1}{2}V_\mathcal{X} & -E_{\lambda }V_\mathcal{X} \\[1ex]
-\frac{1}{2}V^t_\mathcal{Y} & \frac{1}{2}V^t_\mathcal{X} & 0 & 0 \\[1ex]
0 & 0 & 0 & 0 \end{array}
\right),
$$
where $I_n$ is a  $n\times n$ unit matrix, and $E_{\lambda}=diag(\lambda_1,\ldots,\lambda_n)$.
\end{lemma}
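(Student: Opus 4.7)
The plan is to derive the matrix by specializing the invariant formula already established in Lemma \ref{A} (equivalently, the displayed identity (\ref{CD}) from its proof),
$$
A_V Z=\tfrac12\eta(V)\vf(Z)+\tfrac12\eta(Z)\vf(V)-\theta(Z)\,\mathbf{E}_\lambda\vf(V)+\tfrac12 g(\vf(V),Z)\,\xi ,
$$
to each frame vector $Z\in\{e_1,\dots,e_n,e_{n+1},\dots,e_{2n},\xi,\zeta\}$. The $k$-th column of the desired matrix is then just the coordinate vector of $A_V e_k$ in the canonical frame.

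First I would record the frame identities we need: $\vf(e_j)=e_{n+j}$, $\vf(e_{n+j})=-e_j$, $\vf(\xi)=\vf(\zeta)=0$, together with $\eta(e_j)=\eta(e_{n+j})=\theta(e_j)=\theta(e_{n+j})=0$, $\eta(\xi)=\theta(\zeta)=1$, $\eta(\zeta)=\theta(\xi)=0$. These follow at once from \eqref{Propts} and the definitions of $\eta,\theta,\vf$.

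Next I would plug each frame vector into the invariant formula. For $Z=e_j$ only the first and last summands survive, giving $A_V e_j=\tfrac12\eta(V)e_{n+j}-\tfrac12 a_{n+j}\,\xi$, which yields the first $n$ columns (the $\tfrac12\eta(V)I_n$ block and the $-\tfrac12 V_{\mathcal Y}^t$ row). For $Z=e_{n+j}$ we similarly obtain $A_V e_{n+j}=-\tfrac12\eta(V)e_j+\tfrac12 a_j\,\xi$, giving the next $n$ columns. For $Z=\xi$ only the middle term survives, $A_V\xi=\tfrac12\vf(V)$, producing the $\xi$-column $\bigl(-\tfrac12 V_{\mathcal Y},\tfrac12 V_{\mathcal X},0,0\bigr)^t$. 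For $Z=\zeta$ only the $\theta$-term survives, $A_V\zeta=-\mathbf{E}_\lambda\vf(V)$, producing the $\zeta$-column $\bigl(E_\lambda V_{\mathcal Y},-E_\lambda V_{\mathcal X},0,0\bigr)^t$.

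There is no real obstacle: the step is purely an identification of coefficients, and once the four columns are assembled the stated matrix is read off directly. The only points requiring a bit of care are the sign conventions in $\vf(e_{n+j})=-e_j$ and the pairing $g(\vf(V),e_k)=-g(V,\vf(e_k))$ used to extract the $\xi$-components, both of which follow from \eqref{phi_prop2}.
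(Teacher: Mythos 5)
Your proposal is correct and coincides with the paper's own proof: both evaluate the invariant formula \eqref{CD} on each frame vector $e_j$, $e_{n+j}$, $\xi$, $\zeta$ and assemble the resulting coordinate vectors as the columns of $A_V$. The computations you outline (including the signs from $\vf(e_{n+j})=-e_j$ and $g(\vf(V),e_j)=-a_{n+j}$) match the displayed column formulas in the paper exactly.
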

\begin{proof}
Columns of $A_V $ are formed by $A_V e_i, A_V e_{n+i}, A_V\xi$ and $A_V\zeta$. Using \eqref{CD} and \eqref{Propts}, we obtain
$$
\begin{array}{l}
A_V e_i=\frac12 \eta(V)e_{n+i} -\frac12 a_{n+i}\xi, \quad A_V e_{n+i}=-\frac12 \eta(V)e_{i} +\frac12 a_{i}\xi,\\[1ex]
A_V\xi=\frac12 \vf(V)=\frac12 (-V_\mathcal{Y}+V_\mathcal{X}), \quad A_V\zeta=-\mathbf{E}_\lambda \vf(V)=E_\lambda V_\mathcal{Y}-E_\lambda V_\mathcal{X}.
\end{array}
$$
\end{proof}

\begin{lemma}\label{Tensor}
The invariant form of Riemannian curvature tensor of the oscillator  group $G_n(\lambda)$ can be given as follows:
\begin{multline*}
R(X,Y)Z=\frac12 g(\vf (X),Y)\vf(Z)+\frac14 \big(g(\vf(X),Z)\vf(Y)-g(\vf(Y),Z)\vf(X)\big)+\\
\frac14 \big(\eta(Y) X-\eta(X)Y\big)\eta(Z)+\frac14 \big(\eta(X)g(Y,Z)-\eta(Y) g(X,Z)\big)\xi +\\
\frac14\big(\eta(X)\theta(Y)-\eta(Y)\theta(X)\big)\big(\eta(Z)\zeta-\theta(Z)\xi\big).
\end{multline*}
\end{lemma}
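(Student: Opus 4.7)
The plan is to verify the identity by splitting the check along the Riemannian product structure established by the Corollary. Both sides are $\mathbb{R}$-trilinear in $X, Y, Z$, so it suffices to test them when each of $X, Y, Z$ is an element of the canonical frame \eqref{Frame}.

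First I would dispatch the cases involving $\zeta$. By the Corollary, $\zeta$ is parallel, so $\nabla \zeta = 0$ gives $R(X,Y)\zeta = 0$; the symmetry $g(R(X,Y)\zeta, W) = g(R(\zeta, W)X, Y)$ then forces $R(\zeta, W) = 0$ as well. Hence the curvature vanishes whenever a slot carries $\zeta$. On the right-hand side one substitutes $\zeta$ (using $\vf(\zeta)=0$, $\eta(\zeta)=0$, $\theta(\zeta)=1$) and checks, via \eqref{Propts}, that the $\theta$-summand on the last line cancels the surviving $\eta$-contributions exactly. The remaining cases sit in the Heisenberg subalgebra $\mathcal{X} \oplus \mathcal{Y} \oplus \mathcal{P}$, where $\theta$ vanishes on every argument and the last summand drops. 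The identity then reduces to the standard Heisenberg curvature formula, which I would verify by computing $R(X,Y)Z = \nabla_X \nabla_Y Z - \nabla_Y \nabla_X Z - \nabla_{[X,Y]} Z$ on basis triples from \eqref{CDcoord} and \eqref{Bk}. Up to the first Bianchi identity and the antisymmetries of $R$, the list of independent components to inspect is short: $R(e_i, e_{n+j})e_k$, $R(e_i, e_{n+j})\xi$, $R(\xi, e_i)e_j$, $R(\xi, e_i)e_{n+j}$, and $R(e_i, e_j)\xi$.

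The main obstacle is purely computational bookkeeping. A useful internal sanity check is the absence of $\mathbf{E}_\lambda = \mathrm{diag}(\lambda_1, \ldots, \lambda_n; \lambda_1, \ldots, \lambda_n)$ from the claimed formula. Had one instead computed $R$ by iterating the invariant expression for $\nabla_Z V$ extracted in the proof of Lemma \ref{A}, $\mathbf{E}_\lambda$-terms would appear both in $\nabla_X \nabla_Y Z - \nabla_Y \nabla_X Z$ and in the commutator term (through the $\theta(X)\mathbf{E}_\lambda \vf(Y) - \theta(Y)\mathbf{E}_\lambda \vf(X)$ part of $[X,Y] = \nabla_X Y - \nabla_Y X$), and they would have to cancel in pairs. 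This cancellation is precisely the parallelism of $\zeta$, and it is what makes the curvature $\lambda$-independent.
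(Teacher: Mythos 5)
Your proposal is correct, and it is organized differently from the paper's own argument. The paper proves Lemma \ref{Tensor} by quoting the coordinate components of $R$ in the frame \eqref{Frame} from Xu--Tan, expanding $X,Y,Z$ over that frame, and collecting terms multilinearly (``rather long accurate computations''). You instead exploit the product structure first: since $\zeta$ is parallel, $R$ vanishes whenever a slot carries $\zeta$, and a direct substitution confirms that the right-hand side also vanishes there (the $\theta$-part of the last summand cancels the fourth summand when $Z=\zeta$, and the third and fifth summands cancel against the fourth when $X=\zeta$ — this checks out). That reduces everything to the Heisenberg factor, where $\theta\equiv 0$, and explains \emph{a priori} why no $\mathbf{E}_\lambda$ appears in the final formula — a structural point the paper's proof leaves implicit. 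What your route buys is less bookkeeping and a conceptual reason for the $\lambda$-independence; what the paper's route buys is that it leans on an already published component table rather than recomputing $R$ from \eqref{CDcoord}. Two small cautions. First, your list of ``independent components'' is slightly short as literally written (e.g.\ $R(e_{n+i},e_{n+j})e_s$ and the $\vf$-conjugates of the $\xi$-cases are not all reachable from your five families by antisymmetry alone); since you are comparing two tensors, you should first note that the right-hand side possesses the pair symmetry $g(R(X,Y)Z,W)=g(R(Z,W)X,Y)$ before using it to prune the list. Second, be aware of a sign-convention trap: computing $R(X,Y)Z=\nabla_X\nabla_YZ-\nabla_Y\nabla_XZ-\nabla_{[X,Y]}Z$ from \eqref{CDcoord}, as you propose, reproduces the stated invariant formula exactly (e.g.\ it yields $R(e_i,\xi)\xi=\tfrac14 e_i$), whereas the component table quoted in the paper's proof is written in the opposite convention ($R(e_i,\xi)\xi=-\tfrac14 e_i$); your from-scratch computation is therefore the safer path, but if you cross-check against the quoted table you must flip signs.
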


\begin{proof} With respect to the frame \eqref{Frame} the components of Riemannian tensor are \cite{Xu-Tan}:
$$
\begin{aligned}
& R\left(e_{i}, e_{n+j}\right) e_{s}=-\frac{1}{2} \delta_{i j} e_{n+s}-\frac{1}{4} \delta_{j s} e_{n+i}, \quad R\left(e_{i}, e_{n+j}\right) e_{n+s}=\frac{1}{2} \delta_{i j} e_{s}+\frac{1}{4} \delta_{i s} e_{j}, \\
& R\left(e_{i}, e_{j}\right) e_{n+s}=\frac{1}{4}\left(\delta_{j s} e_{n+i}-\delta_{i s} e_{n+j}\right), \quad R\left(e_{n+i}, e_{n+j}\right) e_{s}=\frac{1}{4}\left(\delta_{j s} e_{i}-\delta_{i s} e_{j}\right), \\
& R\left(e_{i}, \xi \right) e_{j}=\frac{1}{4} \delta_{i j} \xi , \quad R\left(e_{i}, \xi \right) \xi =-\frac{1}{4} e_{i}, \quad R\left(e_{n+i}, \xi \right) e_{n+j}=\frac{1}{4} \delta_{i j} \xi,\\
&  R\left(e_{n+i}, \xi \right) \xi =-\frac{1}{4} e_{n+i},
\end{aligned}
$$
where $1 \leqslant i, j, s \leqslant n$.  Letting
$$
\begin{array}{l}
X=\sum_{i=1}^n (x_i e_i + x_{n+i} e_{n+i}) +x_{2n+1} \xi  +x_{2n}\zeta, \\[1ex]
Y=\sum_{i=1}^n (y_i e_i + y_{n+i} e_{n+i}) +y_{2n+1} \xi  +y_{2n}\zeta, \\[1ex]
Z=\sum_{i=1}^n (z_i e_i + z_{n+i} e_{n+i}) +z_{2n+1} \xi  +z_{2n}\zeta,
\end{array}
$$
after rather long accurate computations we get what was claimed.

\end{proof}

\begin{theorem}\label{Main}
The oscillator group $G_n(\lambda)$ as Riemannian manifold with the left invariant metric is isometric to the Riemannian product $H(n,1)\times \mathbb{R}$ of $(2n+1)$-dimensional generalized Heisenberg group $H(n,1)$ and abelian subgroup $\mathbb{R}$  with Lie algebras  $\mathfrak{g}_{2n+1}=Span(\xi ,e_1, \dots , e_n; e_{n+1}, \dots , e_{2n})$ and $Span(\zeta )$, respectively. On $G_n(\lambda)$
\begin{itemize}\itemsep=1ex
\item[(a)] $\zeta $ is parallel and hence minimal and totally geodesic;
\item[(b)] \big($\xi ,\vf,g,\eta=g(\cdot,\xi)\big)$ defines the $\alpha$-Sasakian structure on $H(n,1)$ with $\alpha=\frac12$;
\item[(c)] $\xi $ is minimal but not totally geodesic;
\item[(d)] the unit $V\in \xi ^\perp\subset \zeta ^\perp$ is minimal if and only if $\lambda_i^2=\lambda_j^2$ for all $(i,j=1,\ldots,n)$. Moreover
  \begin{itemize}
\item in case $\lambda_1=\ldots=\lambda_n$, each unit $V\in \xi ^\perp\subset \zeta ^\perp$ is  minimal;
\item in case $\lambda_1=\ldots=\lambda_k=-\lambda_{k+1}=\ldots=-\lambda_n$, the field $V=\sum _{i=1}^n(a_i e_i+a_{n+i}e_{n+i})$  is minimal if
$$
{\sum_{i=1}^k(a^2_i+a^2_{n+i})-\sum_{i=k+1}^n(a^2_i+a^2_{n+i})=0.}
$$
\end{itemize}
\end{itemize}
\end{theorem}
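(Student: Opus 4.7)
The proof proceeds item by item. Part (a) is literally the Corollary following Lemma~\ref{A}: $\zeta$ is parallel, so $A_\zeta=0$ and both the minimality equation \eqref{Minimality} and the totally geodesic equation \eqref{TGF} hold trivially. Part (b) is a direct verification of $(\nabla_X\varphi)Y=\tfrac12(g(X,Y)\xi-\eta(Y)X)$ on basis pairs from $\{e_1,\ldots,e_{2n},\xi\}$ using \eqref{CDcoord}, which is the defining identity \eqref{Trans_Sasaki} for $\alpha$-Sasakian with $\alpha=\tfrac12$, $\beta=0$.

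For part (c), Lemma~\ref{A} with $V=\xi$ collapses to $A_\xi=\tfrac12\varphi$ on the whole tangent space, so the singular values are $\tfrac12$ with multiplicity $2n$ and $0$ with multiplicity $2$; the weights in \eqref{Minimality} are $\tfrac45$ on $\mathcal X\oplus\mathcal Y$ and $1$ on $\{\xi,\zeta\}$. Using $(\nabla_YA_\xi)X=\tfrac12(\nabla_Y\varphi)X$ together with (b) and the extension to $Y=\zeta$ via \eqref{CDcoord}, one finds $\sum_I(\nabla_{e_I}A_\xi)e_I\in\mathbb R\xi$, and by Lemma~\ref{Tensor} the curvature contribution $\sum_I A_\xi R(\xi,A_\xi e_I)e_I$ is also proportional to $\xi$; hence $\check{H}_\xi=c\,\xi$ and $\xi$ is minimal. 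To see that $\xi$ is not totally geodesic, substitute $X=Y=e_1$ in \eqref{TGF}: the horizontal residue does not vanish.

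Part (d) is the core. For a left-invariant unit $V\in\xi^\perp\cap\zeta^\perp$, Lemma~\ref{A} specialises to
$$A_V X=\tfrac12\eta(X)\varphi V+\tfrac12 g(X,\varphi V)\xi-\theta(X)\mathbf{E}_\lambda\varphi V,$$
so $V$ and every $W\in\mathcal X\oplus\mathcal Y$ orthogonal to $\mathrm{span}(V,\varphi V)$ lie in $\ker A_V$, while the non-trivial action of $A_V$ is confined to the three-dimensional $\mathrm{span}(\varphi V,\xi,\zeta)$; from this the singular decomposition is read off. I then expand each summand of \eqref{Minimality} on the orthonormal frame $\{V,\varphi V,E_1,\ldots,E_{2n-2},\xi,\zeta\}$ using the two auxiliary derivations
$$\nabla_Y(\varphi V)=\tfrac12\eta(Y)V+\tfrac12 g(Y,V)\xi-\theta(Y)\mathbf{E}_\lambda V,$$
$$\nabla_Y(\mathbf{E}_\lambda\varphi V)=\tfrac12\eta(Y)\mathbf{E}_\lambda V+\tfrac12 g(\mathbf{E}_\lambda Y,V)\xi-\theta(Y)\mathbf{E}_\lambda^{\,2}V,$$
which follow from \eqref{CDcoord}, together with Lemma~\ref{Tensor} evaluated at $X=V$. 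After the $\xi$-, $\zeta$- and $E_\alpha$-components cancel by summation over the frame, the horizontal part of $\check{H}_V$ becomes a linear combination of $V,\mathbf{E}_\lambda V,\mathbf{E}_\lambda^{\,2}V$ and $\varphi V$. The condition $\check{H}_V\parallel V$ first kills the $\varphi V$-coefficient by skew-symmetry of $\varphi$, and then forces $\mathbf{E}_\lambda^{\,2}V\in\mathbb R V$, i.e.\ $\lambda_i^2=\lambda_j^2$ whenever both indices lie in the support of $V$. The two stated subcases follow: when all $\lambda_i$ coincide, $\mathbf{E}_\lambda V=\lambda V$ identically and every unit $V$ is minimal; when the $\lambda_i$ coincide only up to sign, the residual coefficient of $\mathbf{E}_\lambda V$ in $\check{H}_V$ must vanish, which simplifies to $\sum_{i\le k}(a_i^2+a_{n+i}^2)=\sum_{i>k}(a_i^2+a_{n+i}^2)$.

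The main obstacle is the bookkeeping: the frame has five distinct types of vectors with three distinct singular-value weights, and the resulting sums must be projected onto the directions $V,\mathbf{E}_\lambda V,\mathbf{E}_\lambda^{\,2}V,\varphi V,\xi,\zeta$. Organising the expansion by frame-vector type first and then by target direction, and using the identities $g(V,\varphi V)=0$, $g(V,\mathbf{E}_\lambda V)=\sum\lambda_i(a_i^2+a_{n+i}^2)$ and $|\mathbf{E}_\lambda V|^2=\sum\lambda_i^2(a_i^2+a_{n+i}^2)$ to identify coefficients, keeps the algebra tractable.
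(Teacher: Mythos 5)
Parts (a)--(c) of your argument are sound. For (c) you take a genuinely different route from the paper: you compute $\check H_\xi$ directly from $A_\xi=\tfrac12\vf$ and test \eqref{TGF} on $X=Y=e_1$, whereas the paper observes that $\xi$ is the Reeb field of the totally geodesic $\alpha$-Sasakian factor $H(n,1)$ with $\alpha=\tfrac12$ and simply invokes Theorems \ref{ReebTG} and \ref{ReebH}. Your computation works (the curvature contribution is in fact zero, not merely ``proportional to $\xi$'' --- note that $A_\xi R(\xi,\cdot)\cdot$ always lands in $\mathcal X\oplus\mathcal Y$, so it must vanish rather than be $\xi$-collinear), but the paper's reduction to the trans-Sasakian theorems is what makes (b) more than a side remark.

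The genuine gap is in part (d), in the choice of frame for \eqref{Minimality}. That condition is stated with respect to the \emph{singular} frame of $A_V$: each summand carries the weight $1/(1+\sigma_i^2)$ attached to its own singular direction. You propose to expand over $\{V,\vf V,E_1,\dots,E_{2n-2},\xi,\zeta\}$, but $\xi$ and $\zeta$ are \emph{not} singular vectors of $A_V$ in general: the block of $A_V^tA_V$ on $\mathrm{span}(\xi,\zeta)$ has off-diagonal entry $-\tfrac12 V^t\mathbf{E}_\lambda V$, so the true singular vectors are the rotated combinations $\e_{2n+1}=(\alpha\xi+\beta\zeta)/\sqrt{\alpha^2+\beta^2}$, $\e_{2n+2}=(-\beta\xi+\alpha\zeta)/\sqrt{\alpha^2+\beta^2}$ with distinct squared singular values $\mu_1\neq\mu_2$ given by the roots of \eqref{Poly}. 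Since the two weights $1/(1+\mu_1)$ and $1/(1+\mu_2)$ differ, the weighted sum is not invariant under replacing $\{\e_{2n+1},\e_{2n+2}\}$ by $\{\xi,\zeta\}$, and assigning weights to $\xi$ and $\zeta$ individually gives the wrong tension. The entire content of the paper's proof of (d) lives in exactly this rotation: the minimality condition becomes $(\alpha I_{2n}-2\beta\mathbf{E}_\lambda)^2\sim I_{2n}$ and $(\beta I_{2n}+2\alpha\mathbf{E}_\lambda)^2\sim I_{2n}$, whose sum forces $\mathbf{E}_\lambda^2\sim I_{2n}$, and the case split $\alpha\beta=0$ versus $\alpha\beta\neq0$ is what produces, via Vieta's theorem for \eqref{Poly}, the auxiliary condition $V^t\mathbf{E}_\lambda V=0$ in the mixed-sign case and the unconditional minimality when all $\lambda_i$ coincide. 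Your frame is a singular frame only when $V^t\mathbf{E}_\lambda V=0$ already holds, so assuming it at the outset is circular. (A secondary point: your conclusion ``$\lambda_i^2=\lambda_j^2$ whenever both indices lie in the support of $V$'' is not the statement being proved, which requires $\lambda_i^2=\lambda_j^2$ for all $i,j$; and in the paper's computation $\check H_V$ has no $\vf V$-component to kill --- it is a combination of $V$, $\mathbf{E}_\lambda V$, $\mathbf{E}_\lambda^2 V$ only.) To repair the argument you must either diagonalize the $(\xi,\zeta)$-block first and carry $\alpha,\beta$ through the computation as the paper does, or replace \eqref{Minimality} by the frame-independent form $\mathrm{tr}\bigl((I+A_V^tA_V)^{-1}(\nabla A_V+A_V R(V,A_V\cdot)\cdot)\bigr)$ and invert the non-diagonal $2\times2$ block explicitly; as written, neither step is present.
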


\begin{proof}  (a). \ If $V=\zeta$, then Lemma \ref{A_V} implies $A_\zeta=0$ and  \eqref{eqn:6}  implies $\zeta $ is minimal and totally geodesic.\vspace{1em}

(b) Restrict out considerations on totally geodesic $H(n,1)\subset G_n(\lambda)$.  Then $\forall Z$ we have $ \theta(Z)=0 $ and \eqref{Propts} imply
$$
\begin{array}{l}
 \vf^2=-I+\eta\otimes\xi \quad \vf\xi=0,\quad \eta\vf=0, \\[1ex]
\eta(X)=g(X,\xi),\quad \eta(\xi)=1, \\[1ex]
 g(\vf X,\vf Y)=g(X,Y)-\eta(X)\eta(Y).
\end{array}
$$
To prove that $H(n,1)$ is $\alpha$ -Sasakian (cf. \eqref{Basic}, \eqref{BasicMetric} and \eqref{Trans_Sasaki}) we only need to prove that
$$
(\nabla_Z \varphi)W =\alpha(g(Z,W)\xi - \eta(W)Z)  \quad \forall W,Z\in \ker(\theta).
$$
If $V=\xi$, then  Lemma \ref{A} implies
$
A_\xi =\frac12 \vf.
$
In other words,
$$
\vf(Z)=-2\nabla_Z\xi =2A_\xi Z.
$$
By definition,
$$
(\nabla_Z \varphi)W=\nabla_Z(\vf(W))-\vf (\nabla_ZW).
$$
According to \eqref{CD}, for all $W,Z\in \ker(\theta)$
$$
\nabla_ZW=-A_Z W =
-\frac12 \eta(Z)\vf(W) -\frac12 \eta(W)\vf(Z) -\frac12 g(\vf(Z),W)\xi.
$$
Thus, we have
$$
\nabla_Z(\vf(W))=-\frac12 \eta(Z)\vf^2(W) -\frac12 g(\vf(Z),\vf(W))\xi.
$$
$$
\vf(\nabla_ZW)=-\frac12 \eta(Z)\vf^2(W) -\frac12 \eta(W)\vf^2(Z)
$$
So, we have
\begin{multline*}
(\nabla_Z \varphi)W=-\frac12 \eta(W)\vf^2(Z)+\frac12 g(\vf(Z),\vf(W))\xi=\\
-\frac12\big( \eta(W)(-Z+\eta(Z)\xi)+(g(Z,W)-\eta(Z)\eta(W))\xi\big)=\frac12\big( -g(Z,W)\xi+\eta(W)Z \big)
\end{multline*}
Hence, the subgroup $H(n,1)$ carries the $\alpha$-Sasakian structure with $\alpha=\frac12$.\vspace{1em}

(c)  Since the submanifold $H(n,1)$ carries the $\alpha$-Sasakian structure with $\alpha=\frac12$ and is totally geodesic in $G_n(\lambda)$, one can apply Theorems \ref{ReebTG} and \ref{ReebH} to see that $\xi$, being the Reeb vector field on $H(n,1)$, is minimal but not totally geodesic vector field on $H(n,1)$ as well as on $G_n(\lambda)$.\vspace{1em}

(d) If $V\subset\xi^\perp\subset\zeta^\perp$, then $\eta(V)=\theta(V)=0$, and \eqref{CD} implies $\nabla_VV=0$. So, $V$ is \textit{geodesic} vector field on $G_n(\lambda)$.  The matrix of Nomizu operator  takes the form
$$
 A_V =
\left( \begin{array}{cccc}
0 & 0 & -\frac{1}{2}V_\mathcal{Y} & E_{\lambda }V_\mathcal{Y} \\[1ex]
0 & 0 & \frac{1}{2}V_\mathcal{X} & -E_{\lambda }V_\mathcal{X} \\[1ex]
-\frac{1}{2}V^t_\mathcal{Y} & \frac{1}{2}V^t_\mathcal{X} & 0 & 0 \\[1ex]
0 & 0 & 0 & 0 \end{array}
\right)=
\left( \begin{array}{ccc}
 0_{2n} & \frac12 \vf(V) & -\textbf{E}_{\lambda }\vf(V) \\[1ex]
\frac12\vf(V)^t & 0&0 \\[1ex]
0 & 0 & 0 \end{array}
\right),
$$
where $0_{2n}$ is $2n\times 2n$ zero matrix. Then
$$
 A^t_V A_V=
\left( \begin{array}{ccc}
 0_{2n} &  \frac12 \vf(V) &0 \\[1ex]
\frac12 \vf(V)^t & 0&0 \\[1ex]
 -\vf(V)^t\textbf{E}_{\lambda } & 0 & 0 \end{array}
\right)\left( \begin{array}{ccc}
 0_{2n} & \frac12 \vf(V) & -\textbf{E}_{\lambda }\vf(V) \\[1ex]
\frac12\vf(V)^t & 0&0 \\[1ex]
0 & 0 & 0 \end{array}
\right)=
$$
$$
\left( \begin{array}{ccc}
\frac14\varphi (V)\varphi(V)^ t & 0 & 0 \\[1ex]
0 & \frac14\varphi(V)^ t \varphi(V)  & -\frac12\varphi(V)^t \mathbf{E}_\lambda \varphi(V) \\[2ex]
0 & -\frac12\varphi(V)^ t\mathbf{E}_\lambda \varphi (V) &  \varphi(V)^t\mathbf{E}^2_\lambda\varphi(V)\end{array}
\right).
$$
Observe, that
$$\varphi(V)^ t \varphi(V)=|\varphi(V)^2|=|V|^2=1,$$
$$ \varphi(V)^ t\mathbf{E}_\lambda \varphi (V)=\sum_{j=1}^n \lambda_j(a_{n+j}^2+a_j^2)=V^ t\mathbf{E}_\lambda, V$$
$$
\varphi(V)^ t\mathbf{E}^2_\lambda \varphi (V)=\sum_{j=1}^n \lambda^2_j(a_{n+j}^2+a_j^2)=V^ t\mathbf{E}^2_\lambda V.
$$
So finally,
$$
 A^t_V A_V=
\left( \begin{array}{ccc}
\frac14\varphi (V)\varphi(V)^ t & 0 & 0 \\[1ex]
0 & \frac14  & -\frac12 V^t \mathbf{E}_\lambda  V \\[2ex]
0 & -\frac12 V^ t\mathbf{E}_\lambda  V &   V^t\mathbf{E}^2_\lambda V
\end{array}
\right).
$$
The matrix $A_V^t A_V$ is a block diagonal. It is easy to see that
$$
rank \left(\varphi (V){\varphi }^ t(V)\right)=1, \quad  trace\left(\varphi (V)\varphi (V)^t\right) =\frac14 |V|^2=\frac14
$$
Denote by $\sigma_1,\ldots,\sigma_{2n+2}$ a singular values and by bold $\mathbf{e}_1,\ldots, \mathbf{e}_{2n+2}$ a corresponding singular  orthonormal frame for $A_V$. Observe, that
  ${\sigma }^2=0$ is the eigenvalue of multiplicity  $2n-1$, while  ${\sigma }^2=\frac{1}{4}$
is the eigenvalue of multiplicity 1 with $\varphi (V)$ as the eigenvector.
The rest of singular values are the roots of
\begin{equation}\label{Poly}
\mu ^2-\left(\frac14+V^ t E^2_\lambda V\right)\mu  +\frac14 \left(V^ t E^2_\lambda V -\left(V^t E_\lambda V\right)^2\right)=0.
\end{equation}
In coordinate form the equation above can be expressed as
$$
\mu ^2-\left(\frac14+\sum_{i=1}^n \lambda_i^2(a_i^2+a_{n+i}^2) \right)\mu  +\frac18 \sum_{i,j=1}^n(\lambda_i-\lambda_j)^2(a_i^2+a_{n+i}^2)(a_j^2+a_{n+j}^2)=0.
$$
The corresponding singular vectors are of the form
$$
\mathbf{e}_{2n+1}=\frac{1}{\sqrt{{\alpha }^2+{\beta }^2}}\left(\alpha \xi +\beta \ \zeta \right),\quad \mathbf{e}_{2n+2}=\frac{1}{\sqrt{{\alpha }^2+{\beta }^2}}\left(-\beta \xi +\alpha \ \zeta \right)
$$
As a result, the singular frame eigenpairs are
$$
(\sigma_1=\ldots=\sigma_{2n-1}=0;\ \mathbf{e}_1=V,\ \mathbf{e}_2,\dots ,\mathbf{e}_{2n-1}),
$$
where $ (\mathbf{e}_2,\dots ,\mathbf{e}_{2n-1})$ is arbitrary orthonormal frame in $ V^\perp \subset \vf(V)^\perp$,
$$
\left(\sigma_{2n}=\frac{1}{4}\ ,\mathbf{e}_{2n}=\varphi (V)\right),\quad \left(\mu_1,\ \mathbf{e}_{2n+1}\right),\ (\mu_2,\ \mathbf{e}_{2n+2}).
$$
From definition of singular frame it follows that $A_V\mathbf{e}_i=0, \ (i=1,\dots 2n-1)$.  Since $\mathbf{e}_i\in \mathcal{X}\oplus \mathcal{Y}\subset\ker(\eta)\subset \ker(\theta)$ and $\vf$ is skewsymmetric on $\mathcal{X}\oplus \mathcal{Y}$, the  \eqref{CD} implies $\nabla_{\e_i}\e_i=0$. Therefore,
$$
  (\nabla _{\mathbf{e}_i}A_V)\mathbf{e}_i=\nabla_{\e_i} (A_V\e_i)-A_V(\nabla_{\e_i}e_i)=0,\quad R(V,A_V\mathbf{e}_i)\mathbf{e}_i=0\quad (i=1,\dots ,\ 2n-1).
$$
Now, compute
$$
(\nabla_{\mathbf{e}_{2n}}A_V )\mathbf{e}_{2n}=\nabla_{\mathbf{e}_{2n}}(A_V \mathbf{e}_{2n}) -A_V(\nabla_{\mathbf{e}_{2n}}\e_{2n}).
$$
Since $\e_{2n}=\vf(V)$ the \eqref{CD} implies
$$
A_V \mathbf{e}_{2n}=\frac12 g(\vf( V),\vf(V))\xi=\frac12\xi, \quad \nabla_{\mathbf{e}_{2n}}(A_V \mathbf{e}_{2n})=-\frac12\nabla_{\vf(V)}\xi=-\frac14 \vf^2 V=\frac14 V .
$$
Compute
$$
\quad R(V,A_V\mathbf{e}_{2n}) \mathbf{e}_{2n}=\frac12R(V,\xi)\vf(V).
$$
Using Lemma \ref{Tensor} restricted to $\mathcal{X}\oplus \mathcal{Y}$ we get
$$
R(V,\xi)\vf(V)=\frac12 g(\vf (V),\xi )\vf^2(V)+\frac14 \big(g(\vf(V),\vf(V))\vf(\xi)-g(\vf(\xi ),\vf(V))\vf(V)\big)=0.
$$

For  $\mathbf{e}_{2n+1}=\frac{1}{\sqrt{{\alpha }^2+{\beta }^2}}(\alpha \xi +\beta \ \zeta )$  we observe, first, that \eqref{CD} implies
$$
A_V\xi =\frac12  \vf(V), \quad A_V\zeta =-\frac12 \mathbf{E}_\lambda\vf(V)
$$
and hence
$$
A_V\mathbf{e}_{2n+1}=\frac{\left(\alpha I_{2n}-2\beta \mathbf{E}_\lambda \right)}{2\sqrt{\alpha ^2+\beta ^2}}\ \varphi (V),
\qquad \sigma^2_{2n+1}=\left|\frac{\left(\alpha I_{2n}-2\beta \mathbf{E}_\lambda \right)}{2\sqrt{\alpha ^2+\beta ^2}}\ \varphi (V)\right|^2,
$$
where $I_{2n}$ is $2n\times 2n$ unit matrix.
Further on,  from \eqref{CDcoord} it follows that $\nabla_\xi\zeta=\nabla_\zeta\xi=\nabla_\xi\xi=\nabla_\zeta\zeta=0$ and hence
\begin{multline*}
(\nabla _{ \mathbf{e}_{2n+1}} A_ V )\mathbf{e}_{ 2n+1}=\nabla_{\e_{2n+1}}(A_V \e_{2n+1})=\frac{\left(\alpha I_{2n}-2\beta \mathbf{E}_\lambda \right)}{2\sqrt{\alpha ^2+\beta ^2}}\nabla _{ \mathbf{e}_{2n+1}}\varphi (V)=\\
-\frac{\left(\alpha I_{2n}-2\beta \mathbf{E}_\lambda \right)}{2\sqrt{\alpha ^2+\beta ^2}}A_{\vf(V)} \e_{2n+1}
\end{multline*}
Applying again \eqref{CD} we calculate
$$
A_{\vf(V)} \e_{2n+1}=\frac{1}{\sqrt{\alpha^2+\beta^2}}(\alpha A_{\vf(V)}\xi +\beta A_{\vf(V)}\zeta)=-\frac{1}{2\sqrt{\alpha^2+\beta^2}}(\alpha I_{2n}- 2\beta \mathbf{E}_\lambda) V
$$
As a result,
$$
(\nabla _{ \mathbf{e}_{2n+1}} A_ V )\mathbf{e}_{ 2n+1}=\frac{{\left(\alpha I_{2n}-2\beta  \mathbf{E}_\lambda\right)}^2}{4({\alpha }^2+{\beta }^2)}V,
$$
Since $V, A_V\mathbf{e}_{2n+1}\in \mathcal{X}\oplus \mathcal{Y}$ and $\e_{2n+1}$ is a linear combination of $\xi$ and $\zeta$, from Lemma \ref{Tensor} we conclude that
$$
 R\left(V,A_V\mathbf{e}_{2n+1}\right)\mathbf{e}_{2n+1}=0.
$$
In a similar way, for  $\mathbf{e}_{2n+2}=\frac{1}{\sqrt{\alpha ^2+\beta ^2}}(-\beta \xi +\alpha \zeta )$  we get
$$
 A_ V \mathbf{e}_{ 2n+2}=-\frac{\left(\beta I_{2n}{{\mathbf +}2\alpha \mathbf{ E}}_{\lambda }\right)}{2\sqrt{{\alpha }^2+{\beta }^2}}\ \varphi (V), \quad\quad
\sigma ^2_{2n+2}=\left|\frac{\left(\beta I_{2n}+2\alpha \mathbf{ E}_\lambda \right)}{2\sqrt{{\alpha }^2+{\beta }^2}}\ \varphi (V)\right|^2.
$$
$$
(\nabla_ {\mathbf{e}_{2n+2}} A_ V)\mathbf{e}_{2n+2}=
\frac{\left(\beta I_{2n}+2\alpha \mathbf{ E}_{\lambda }\right)^2V}{4(\alpha^2+\beta ^2)}, \quad R\left(V,A_Ve_{2n+2}\right)e_{2n+2}=0
$$
Summing up,
$$
\check{H}_V=\frac{\frac{1}{4}V}{1+\frac{1}{4}}+\frac{1}{1+{\sigma }^2_{2n+1}}\frac{{\left(\alpha I_{2n} -2\beta  \mathbf{E}_\lambda\right)}^2V}{4({\alpha }^2+{\beta }^2)}+\frac{1}{1+{\sigma }^2_{2n+2}}\frac{{\left(\beta I_{2n}+2\alpha \mathbf{E}_\lambda\right)}^2V}{4(\alpha ^2+\beta ^2)}
$$
 The vector field is minimal iff ${\check{H}}_V\parallel V$. It means that the following proportion relation must be fulfilled:
$$
(\alpha I_{2n} -2\beta \mathbf{E}_\lambda)^2\quad\sim\quad I_{2n} ,\quad (\beta I_{2n}+2\alpha \mathbf{E}_\lambda )^2\quad\sim\quad I_{2n}.
$$
It follows
\begin{equation}\label{Eq}
\alpha^2 I_{2n} -\rm 4\alpha \beta \mathbf{ E}_\lambda +4\beta^{2}\mathbf{ E}^{2}_\lambda\ \sim \ I_{2n}, \quad
\beta^{2}I_{2n} +4\alpha \beta \mathbf{ E}_\lambda +4\alpha^2\mathbf{ E}^2_\lambda \ \sim\ I_{2n}.
\end{equation}
Adding, we get
$$
(\alpha^2 +\beta ^2)I_{2n} +4(\alpha ^{2} +\beta^2)\mathbf{E}^2_\lambda\quad\sim\quad I_{2n} \quad \Rightarrow \quad  \mathbf{E}^2_\lambda \quad\sim\quad I_{2n}.
$$
In this case, from \eqref{Eq} we conclude
$$
4\alpha \beta \mathbf{E}_\lambda \quad \sim\quad I_{2n}.
$$
In case $\alpha  =0,\beta\ne0$  we have only  ${E}^2_\lambda\sim I_{2n}$ which implies
$$
\lambda^2_1=\ldots \lambda^2_n=\lambda^2, \quad \sigma ^2_{2n +1}=\lambda^2, \quad \sigma ^2_{2n +2}=\frac14.
$$
The Vieta's theorem for \eqref{Poly} implies that  it  is possible iff $V^t\mathbf{E}_\lambda V=0$.
Then
$$
\check{H}_V=\frac{\frac{1}{4}V}{1+\frac{1}{4}}+\frac{\lambda^2 V}{1+\lambda^2}+\frac{\frac{1}{4}V}{1+\frac{1}{4}}
$$
and hence,  {$V$ is minimal} in assumption  that $V^t\mathbf{E}_\lambda V=0$.

If $\beta=0,\alpha\ne0$, then
$$
\sigma ^2_{2n +1}=\frac14, \quad \sigma ^2_{2n +2}=\lambda^2,
$$
and again the Vieta's theorem for \eqref{Poly} implies that  it  is possible iff $V^t\mathbf{E}_\lambda V=0$. Then
$$
\check{H}_V=\frac{\frac{1}{4}V}{1+\frac{1}{4}}+\frac{\frac{1}{4}V}{1+\frac{1}{4}}+\frac{\lambda^2 V}{1+\lambda^2}
$$
and hence,  {$V$ is minimal}.
The condition ${V^t\mathbf{E}_\lambda V=0}$ make sense if $\mathbf{E}_\lambda \not\sim I_{2n}$  which means that among the structure constants there are ones of differen sign. Suppose $\lambda_1=\ldots=\lambda_k=\lambda>0$ while
$\lambda_{k+1}=\ldots=\lambda_{n}=-\lambda<0$. Then the field $V=\sum _{i=1}^n(a_i e_i+a_{n+i}e_{n+i})$  is minimal if
$$
{V^t\mathbf{E}_\lambda V=0}\quad \sim \quad {\sum_{i=1}^k(a^2_i+a^2_{n+i})-\sum_{i=k+1}^n(a^2_i+a^2_{n+i})=0}
$$
If $\alpha \beta \ne  0$, then
$$
\mathbf{E}_\lambda\quad  \sim \quad I_{2n}
$$
and hence $\lambda _i=\lambda $. In this case $\sigma_{2n +1}=\frac14 +\lambda^2\quad \sigma_{2n +2} =0$, and
$
\alpha =\lambda \quad \beta =-2\lambda^2 ,\quad \alpha^2 +\beta^2=\lambda^{2}(1+4\lambda^{2}).
$
Therefore,
$$
\check{H}_V=\frac{\frac14 V}{ 1+\frac14 } +\frac{\left( 1+\frac{\lambda ^2}{4}\right)V}{ 1+1+\frac{\lambda^2}{4}}=\left(\frac{\sigma^2_1}{1+\sigma^2_1} +\frac{\sigma ^2_{2n +1}}{1+\sigma^{2}_{2n +1}}\right)V
$$
and hence, V is \textbf{minimal} without any restrictions.
The proof is complete.
\end{proof}

Now we can compare our result with the result of Xu and Tan:

\begin{theorem}\cite{Xu-Tan}
The set of leftinvariant unit vector fields on the oscillator group $G_{n}(\lambda)=G\left(\lambda_{1}, \ldots, \lambda_{n}\right)$, such that the corresponding maps into the unit tangent bundles are harmonic, is given by
$$
\{ \pm \xi\} \cup\{ \pm \zeta\} \cup\left(\mathcal{S} \cap\left\{\sum_{j=1}^{n}\left(a_{j} e_{j}+a_{n+j} e_{n+j}\right)\right\}\right)
$$
where for $\lambda_{i}^{2} \neq \lambda_{j}^{2},\left(a_{i}^{2}+a_{n+i}^{2}\right)\left(a_{j}^{2}+a_{n+j}^{2}\right)=0$.
\end{theorem}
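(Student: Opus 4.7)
The plan is to verify each member of the Xu--Tan list satisfies the harmonic map condition and, conversely, that no other left-invariant unit vector field does. Recall that $V:G_n(\lambda)\to(T_1G_n(\lambda),g_S)$ is harmonic precisely when both
$$
\bar\Delta V=|A_V|^2V\qquad\text{and}\qquad\sum_i R(V,A_V e_i)e_i=0
$$
hold. The strategy is to compute the two traces in invariant form via Lemma~\ref{A} for $A_V$, the connection formulas \eqref{CDcoord}, and Lemma~\ref{Tensor} for $R$, and then carry out a case analysis along the decomposition $V=V_\mathcal{X}+V_\mathcal{Y}+\eta(V)\xi+\theta(V)\zeta$.

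The trivial cases come first. For $V=\pm\zeta$, Lemma~\ref{A_V} gives $A_\zeta=0$, so both conditions hold vacuously. For $V=\pm\xi$, Theorem~\ref{Main}(b) identifies the subsystem $(H(n,1);\xi,\vf,\eta,g)$ as an $\alpha$-Sasakian structure with $\alpha=\tfrac12$ sitting as a totally geodesic submanifold; since $\zeta$ is parallel, its contribution to both traces vanishes, and the Sasakian identity $(\nabla_Z\vf)W=\tfrac12(g(Z,W)\xi-\eta(W)Z)$ reduces the computation to a standard calculation on the Sasakian factor, yielding both the harmonicity identity and the vanishing of $\sum R(\xi,A_\xi e_i)e_i$ via Lemma~\ref{Tensor}.

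The substantive case is $V\in\xi^\perp\cap\zeta^\perp$, i.e.\ $\eta(V)=\theta(V)=0$. Here the matrix of $A_V$ in Theorem~\ref{Main}(d) has nonzero columns only in the $\xi$ and $\zeta$ directions, so $A_V e_k=A_V e_{n+k}=0$ for each $k$ and the two traces collapse to the contributions of $\xi$ and $\zeta$ alone. Expanding $(\nabla_\xi A_V)\xi+(\nabla_\zeta A_V)\zeta$ via \eqref{CDcoord} produces a linear combination of $V$ and $\mathbf{E}_\lambda^2 V$, while the curvature trace (evaluated using Lemma~\ref{Tensor}) contributes a term of the form $\mathbf{E}_\lambda V$-type. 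Imposing $\bar\Delta V=|A_V|^2V$ forces $\mathbf{E}_\lambda^2 V$ to be collinear with $V$ on the support of $V$, i.e., the support must lie in a single $\lambda_i^2$-eigenspace of $\mathbf{E}_\lambda^2$. Translated into coordinates, this is exactly the relation $(a_i^2+a_{n+i}^2)(a_j^2+a_{n+j}^2)=0$ for every pair with $\lambda_i^2\neq\lambda_j^2$.

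The hardest step, and the principal obstacle, is to rule out the genuinely mixed cases where $V$ has nontrivial components both in $\mathcal{X}\oplus\mathcal{Y}$ and in $\mathrm{Span}(\xi,\zeta)$. In the invariant form from Lemma~\ref{A}, the cross-terms $\vf^\flat(V)\xi$ and $-2\mathbf{E}_\lambda\vf(V)\theta$ couple the Heisenberg part of $V$ to the central directions. Tracing these through $\bar\Delta V$ and $\sum R(V,A_V e_i)e_i$ produces $\xi$- and $\zeta$-valued terms whose coefficients are nondegenerate quadratic forms in the ``mixed'' coordinates $\eta(V),\theta(V),V_\mathcal{X},V_\mathcal{Y}$. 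Showing that the simultaneous vanishing of all such obstructions forces either $\eta(V)=\theta(V)=0$ or $V_\mathcal{X}+V_\mathcal{Y}=0$, thereby recovering exactly the three families in the statement, is where the bookkeeping is heaviest; this is the step I expect to dominate the proof.
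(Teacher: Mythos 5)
First, a point of orientation: the paper does not prove this theorem at all --- it is quoted verbatim from Xu and Tan \cite{Xu-Tan} and used only to deduce the corollary that, when all the $\lambda_i$ coincide, the harmonic-map-defining fields are minimal. So there is no in-paper proof to match your plan against; what can be checked is whether your outline is internally sound given the paper's lemmas.

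Two problems arise. The first is a factual error at the center of your "substantive case": for $V\in\xi^\perp\cap\zeta^\perp$ one has, by Lemma~\ref{A_V} (or by \eqref{CD}), $A_Ve_k=-\tfrac12 a_{n+k}\xi$ and $A_Ve_{n+k}=\tfrac12 a_k\xi$, so the $e$-columns of $A_V$ are \emph{not} zero and the two traces do not collapse to the $\xi$- and $\zeta$-contributions alone. What is true (and is how the paper organizes the analogous minimality computation in Theorem~\ref{Main}(d)) is that in the \emph{singular} frame of $A_V$ the directions in $(\mathcal{X}\oplus\mathcal{Y})\cap\vf(V)^\perp$ contribute nothing, while the direction $\vf(V)$ contributes the extra term $(\nabla_{\vf(V)}A_V)\vf(V)=\tfrac14 V$. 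That term happens to be collinear with $V$, so your final support condition ($V$ must lie in a single eigenspace of $\mathbf{E}_\lambda^2$, i.e.\ $(a_i^2+a_{n+i}^2)(a_j^2+a_{n+j}^2)=0$ whenever $\lambda_i^2\ne\lambda_j^2$) does survive, but the derivation as written does not establish it; you also never verify the second harmonic-map condition $\sum_i R(V,A_Ve_i)e_i=0$ in this case, which requires Lemma~\ref{Tensor}. The second and more serious problem is that the exhaustiveness half of the theorem --- ruling out unit fields with nontrivial components in both $\mathcal{X}\oplus\mathcal{Y}$ and $\mathrm{Span}(\xi,\zeta)$ --- is precisely the part you defer ("this is the step I expect to dominate the proof"). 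Since that elimination is what makes the displayed set an equality rather than an inclusion, the proposal as it stands is a plan for a proof, not a proof.
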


\begin{corollary}
 On the oscillator group with structure constants $\lambda_i=\lambda_j \ (\forall i,j=1,\ldots,n)$ {each left invariant unit vector field that defines a \textbf{{harmonic map}}} into the unit tangent bundle with  Sasaki metric {\textbf{is minimal}}.
\end{corollary}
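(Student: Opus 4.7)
The plan is to combine the Xu--Tan classification of harmonic-map-defining unit vector fields (the theorem immediately preceding the corollary) with the minimality results packaged in Theorem~\ref{Main}(a),(c),(d). First I would specialize the Xu--Tan set to the hypothesis $\lambda_1=\cdots=\lambda_n$. Under this assumption the side condition ``for $\lambda_i^2\neq\lambda_j^2$, $(a_i^2+a_{n+i}^2)(a_j^2+a_{n+j}^2)=0$'' is vacuous, since no pair of indices $(i,j)$ satisfies $\lambda_i^2\neq\lambda_j^2$. Consequently the harmonic-map set collapses to
$$
\{\pm\xi\}\;\cup\;\{\pm\zeta\}\;\cup\;\bigl\{V\in\mathcal{X}\oplus\mathcal{Y}:\ g(V,V)=1\bigr\}.
$$

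It then suffices to verify minimality of every element of this list. For $\pm\xi$ this is Theorem~\ref{Main}(c); for $\pm\zeta$ it is Theorem~\ref{Main}(a), which in fact yields the stronger property of being totally geodesic; and for an arbitrary unit $V\in\mathcal{X}\oplus\mathcal{Y}=\xi^\perp\cap\zeta^\perp$ it is precisely the first sub-item of Theorem~\ref{Main}(d), specialized to $\lambda_1=\cdots=\lambda_n$, which asserts minimality of every such $V$ without any restriction on the coefficients $a_i,a_{n+i}$. Since each harmonic-map-defining unit vector field is covered by one of these three cases, the corollary follows. There is no substantive obstacle: the entire argument is bookkeeping, matching each entry of the Xu--Tan harmonic-map list against the corresponding entry of the minimality classification in Theorem~\ref{Main}.
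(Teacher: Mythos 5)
Your proposal is correct and is exactly the argument the paper intends: the corollary is a direct consequence of specializing the Xu--Tan harmonic-map classification to $\lambda_1=\cdots=\lambda_n$ (which makes the side condition vacuous) and matching the resulting list $\{\pm\xi\}\cup\{\pm\zeta\}\cup\{$unit $V\in\mathcal{X}\oplus\mathcal{Y}\}$ against Theorem~\ref{Main}(c), (a), and the first sub-item of (d), respectively. No gap.
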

J. C. Gonz\'alez-D\'avila and L. Vanhecke \cite{GD-Vh} considered minimality of leftinvariant unit vector fields on generalized Heisenberg group and proved that the $e_1,\ldots, e_n; e_{n+1},\ldots, e_{2n}, \xi$ of the frame \eqref{Frame} are minimal. The Theorem \ref{Main} presents more general result.


\begin{thebibliography}{10}
\bibitem{Biggs}  Biggs, R., Remsing, C.:  Some remarks on the oscillator group. Diff. Geom. and its Appl. 35, 199 -- 209, (2014). \url{doi.org/10.1016/j.difgeo.2014.03.003}.
\bibitem{GZ} Gluck, H., Ziller, W.: On the volume of a unit vector field on the three sphere. Com. Math Helv. 61, 177 -- 192 (1986). \url{eudml.org/doc/140047}.
\bibitem{GM-LF} Gil-Medrano, O., Llinares-Fuster, E.: Minimal unit vector fields. Toh\^oku Math. J. 54(1), 71 -- 84  (2002).  \url{doi: 10.2748/tmj/1113247180}.
\bibitem{GD-Vh}   Gonz\'alez-D\'avila, J. C.,  Vanhecke, L.: Examples of minimal unit vector fields. Ann. Global Anal. Geom. 18, 385 -- 404 (2000). \url{doi.org/10.1023/A:1006788819180}
\bibitem{GDVh}  Gonz\'alez-D\'avila, J. C.,  Vanhecke, L.:
                Invariant harmonic unit vector fields on Lie groups. Bollettino dell'Unione Matematica Italiana, Serie 8, Vol. 5-B, No.2, 377 -- 403 (2002). \url{www.bdim.eu/item?id=BUMI_2002_8_5B_2_377_0}
\bibitem{Ob}   Obi\~{n}a, J.A.: New Classes of almost Contact metric structures, Publ. Math. Debrecen 32: 187 -- 193 (1985). \url{publi.math.unideb.hu/load_doc.php?p=3718&t=pap}
\bibitem{Marrero}    Marrero, J. C.: The local structure of trans-Sasakian manifolds. Ann. di Mat. Pura ed Appl. 162(4), 77 -- 86 (1992). \url{doi: 1007/BF01760000}
\bibitem{Ym-2002}  Yampolsky, A.: On the mean curvature of a unit vector field, Publ. Math. Debrecen  60, 2/3, 131 -- 155  (2002). \url{DOI: 10.5486/PMD.2002.2562}
\bibitem{Ym-2003}  Yampolsky, A.:  A totally geodesic property of Hopf vector fields, Acta Math. Hung. 101, 93 -- 112 (2003). \url{doi.org/10.1023/B:AMHU.0000003895.90745.38}
\bibitem{Ym-2004} Yampolsky, A.: Full description of totally geodesic unit vector fields on Riemannian 2-manifold. Mat. Fiz. Anal. Geom., 11:3, 355 -- 365 (2004). \url{www.mathnet.ru/eng/jmag213}
\bibitem{Ym-2005} Yampolsky, A.: On special types of minimal and totally geodesic unit vector fields. Proc. 7-th Intl Conf. in Geometry, Integrability and Quantization June 2 - 10,  Varna, Bulgaria, 292 -- 306 (2005). \url{www.emis.de/proceedings/Varna/vol7/P07Yampolsky.pdf}
\bibitem{Ym-2007} Yampolsky, A.: Invariant totally geodesic unit vector fields on three-dimensional Lie groups. J. Math. Phys., Anal., Geom. 3 (2), 253 -- 276 (2007). \url{jmag.ilt.kharkiv.ua/index.php/jmag/article/view/jm03-0253e/521}
\bibitem{Ym-2022} Yampolsky, A.: On properties of the Reeb vector field of $(\alpha, \beta)$ trans-Sasakian structure, Turkish J.  Math, V. 46: No. 6, Art. 19 (2022). \url{doi.org/10.55730/  1300-0098.3271}
\bibitem{Xu-Tan} Xu, N., Tan, J.: Invariant harmonic unit vector fields on the oscillator groups, Czech. Math. J., 69,  4,  907 -- 924 (2019). \url{doi.org/10.21136/CMJ.2019.0538-17}





\end{thebibliography}
\end{document}